\theoremstyle{definition}
\newtheorem{defin}{Definition}[section]
\newtheorem{rem}[defin]{Remark}
\theoremstyle{plane}
\newtheorem{theo}[defin]{Theorem}
\newtheorem{prop}[defin]{Proposition}
\newtheorem{coroll}[defin]{Corollary}
\newtheorem{lemma}[defin]{Lemma}
\newcommand{\mbb}{\mathbb}
\newcommand{\mc}{\mathcal}
\newcommand{\veps}{\varepsilon}
\newcommand{\what}{\widehat}
\newcommand{\wtilde}{\widetilde}
\newcommand{\vphi}{\varphi}
\newcommand{\oline}{\overline}
\newcommand{\hra}{\hookrightarrow}
\newcommand{\lra}{\Leftrightarrow}
\newcommand{\g}{\gamma}
\newcommand{\R}{\mathbb{R}}
\newcommand{\N}{\mathbb{N}}
\newcommand{\Z}{\mathbb{Z}}
\renewcommand{\Re}{{\rm Re}\,}
\def\d{\partial}
\title{\large{\bfseries{TIME-DEPENDENT LOSS OF DERIVATIVES FOR \\ HYPERBOLIC OPERATORS WITH
NON REGULAR COEFFICIENTS}}}
\author{\textsl{Ferruccio Colombini} \\
\small{\textsc{Universit\`a di Pisa}} -- \small{\ttfamily{colombini@dm.unipi.it}} \vspace{0.3cm} \\
\textsl{Daniele Del Santo} \\
\small{\textsc{Universit\`a di Trieste}} -- \small{\ttfamily{delsanto@units.it}} \vspace{0.3cm} \\
\textsl{Francesco Fanelli} \\
\small{\textsc{BCAM - Basque Center for Applied Mathematics}} --
\small{\ttfamily{ffanelli@bcamath.org}} \vspace{0.3cm} \\
\textsl{Guy M\'etivier} \\
\small{\textsc{Universit\'e de Bordeaux 1}} -- \small{\ttfamily{guy.metivier@math.u-bordeaux1.fr}}}
\date\today
\begin{document}

\maketitle

\subsubsection*{Abstract}
{\small In this paper we will study the Cauchy problem for strictly hyperbolic operators with low regularity
coefficients in any space dimension $N\geq1$. We will suppose the coefficients to be log-Zygmund continuous in time and log-Lipschitz continuous in space.
Paradifferential calculus with parameters will be the main tool to get energy estimates in Sobolev spaces and these estimates will present a time-dependent loss of derivatives.}

\subsubsection*{Mathematical Subject classification (2010)}{\small Primary: 35L15; Secondary: 35B65, 35S50, 35B45}

\subsubsection*{Keywords}{\small Hyperbolic operators, non-Lipschitz coefficient, log-Zygmund regularity, energy estimates, well-posedness}

\section{Introduction}

This paper is devoted to the study of the Cauchy problem for a second order strictly hyperbolic operator defined in a strip
$[0,T]\times\R^N$, for some $T>0$ and  $N\geq1$. Consider a second order operator of the form
\begin{equation} \label{def:op}
Lu\;:=\;\d^2_tu\,-\,\sum_{j,k=1}^N\d_j\left(a_{jk}(t,x)\,\d_ku\right)
\end{equation}
(with $a_{jk}=a_{kj}$ for all $j$, $k$) and assume that $L$ is strictly hyperbolic with bounded coefficients, i.e. there exist
two constants $0<\lambda_0\leq\Lambda_0$ such that
$$
\lambda_0\,|\xi|^2\;\leq\;\sum_{j,k=1}^N a_{jk}(t,x)\,\xi_j\,\xi_k\;\leq\;\Lambda_0\,|\xi|^2
$$
for all $(t,x)\in[0,T]\times\R^N$ and all $\xi\in\mbb{R}^N$.

It is well-known (see \cite{H-S}; see also e.g. \cite[Ch. IX]{Horm} or \cite[Ch. 6]{Miz} for analogous results) that, if the coefficients $a_{jk}$ are Lipschitz continuous
with respect to $t$ and only measurable in $x$, then the Cauchy problem for $L$ is well-posed in
$H^1\times L^2$. If the $a_{jk}$'s are Lipschitz continuous
with respect to $t$ and $\mc{C}^\infty _b$ (i.e. $\mc{C}^\infty$ and bounded with all their derivatives)
with respect to the space variables, one can recover the well-posedness in $H^{s+1}\times H^s$ for all $s\in\R$.
Moreover, in the latter case, one gets, for all $s\in\R$  and for a constant $C_s$ depending only on it,  the following
energy estimate:
\begin{eqnarray}
 &&\sup_{0\le t \le T} \biggl(\|u(t, \cdot)\|_{H^{s+1}}\,  +
 \|\partial_t u(t,\cdot)\|_{H^s}\biggr) \label{est:no-loss} \\
&&\qquad\qquad\qquad\qquad\qquad\qquad
\leq\,C_s\,\,\left(\|u(0, \cdot)\|_{H^{s+1}}+
 \|\partial_t u(0,\cdot)\|_{H^s} + \int_0^{T}  \|  L u(t,\cdot)\|_{H^s}\, dt\right) \nonumber
\end{eqnarray}
for all $u\in\mc{C}([0,T];H^{s+1}(\R^N))\,\cap\,\mc{C}^1([0,T];H^s(\R^N))$ such that $Lu\in L^1([0,T];H^s(\R^N))$.
Let us explicitly remark that previous inequality involves no loss of regularity for the function $u$:
estimate \eqref{est:no-loss} holds for every $u\in\mc{C}^2([0,T];H^\infty(\R^N))$ and the Cauchy problem for $L$ is
well-posed in $H^\infty$ \emph{with no loss of derivatives}.

If the Lipschitz continuity (in time) hypothesis is not fulfilled, then \eqref{est:no-loss} is no more true. Nevertheless,
one can still try to recover $H^\infty$-well-posedness, possibly with a {\it loss of derivatives} in the energy estimate. 
%This loss cannot be avoided, as shown by Cicognani and Colombini in \cite{Cic-C}.
%The idea is to weaken regularity conditions with respect to the time variable, but asking for nicer hypothesis on the dependence on $x$. 
%Hence, the first case to consider is when the coefficients $a_{jk}$ depend only on $t$:

The first case to consider is the case of the coefficients $a_{jk}$ depending only on $t$:
$$
 Lu\;=\;\d^2_tu\,-\,\sum_{j,k=1}^N a_{jk}(t)\,\d_j\d_ku\,.
$$
In \cite{C-DG-S}, Colombini, De Giorgi and Spagnolo assumed the coefficients to satisfy an integral log-Lipschitz condition:
\begin{equation} \label{hyp:int-LL}
\int^{T-\veps}_0\left|a_{jk}(t+\veps)\,-\,a_{jk}(t)\right|dt\;\leq\;C\,\veps\,\log\left(1\,+\,\frac{1}{\veps}\right)\,,
\end{equation}
for some constant $C>0$ and all $\veps\in\,]0,T]$. To get the energy estimate, they first smoothed out the
coefficients using a mollifier kernel $\left(\rho_\veps\right)_\veps$. Then, by Fourier transform, they defined
an approximated energy $E_\veps(\xi,t)$ in phase space, where the problem becomes a family of  ordinary differential equations. At that point,
the key idea was to perform a different approximation of the coefficients in different zones of the phase space: in particular,
they set $\veps\,=\,|\xi|^{-1}$. Finally, they obtained an energy estimate \emph{with a fixed loss of derivatives}: there exists
a constant $\delta>0$ such that, for all $s\in\R$, the inequality
\begin{eqnarray}
  &&\sup_{0\le t \le T} \biggl(\|u(t, \cdot)\|_{H^{s+1-\delta}}\,  +
 \|\partial_t u(t,\cdot)\|_{H^{s-\delta}}\biggr) \label{est:c-loss} \\
&&\qquad\qquad\qquad\qquad\qquad\qquad\leq\,C_s\,\left(\|u(0, \cdot)\|_{H^{s+1}}+
 \|\partial_t u(0,\cdot)\|_{H^s} + \int_0^{T}  \|  L u(t,\cdot)\|_{H^s}\, dt\right) \nonumber
\end{eqnarray}
holds true for all $u\in\mc{C}^2([0,T];H^\infty(\R^N))$, for some constant $C_s$ depending only on $s$. Let us remark that if the
coefficients $a_{jk}$ are not Lipschitz continuous then a loss of regularity cannot be avoided, as shown
by Cicognani and Colombini in \cite{Cic-C}.
Besides, in that paper the authors proved that, if the regularity of the coefficients $a_{jk}$ is measured by a
modulus of continuity, any intermediate modulus of continuity between the Lipschitz and the
log-Lipschitz ones entails necessarily a loss of regularity, which, however, can be
made arbitrarily small.

Recently Tarama (see paper \cite{Tar}) analysed the problem when coefficients satisfy an integral
log-Zygmund condition: there exists a constant $C>0$ such that, for all $j$, $k$ and all $\veps\in\,]0,T/2[$, one has
\begin{equation} \label{hyp:int-LZ}
 \int^{T-\veps}_\veps\left|a_{jk}(t+\veps)\,+\,a_{jk}(t-\veps)\,-\,2\, a_{jk}(t)\right|dt\;\leq\;
C\,\veps\,\log\left(1\,+\,\frac{1}{\veps}\right)\,.
\end{equation}
On the one hand, this condition is somehow related, for $a$ of class $\mc{C}^2([0,T])$, to the pointwise condition
$|a(t)|+|t\,a'(t)|+|t^2\,a''(t)|\,\leq\,C$, considered by Yamazaki in \cite{Yama}. On the other hand,
it's obvious that, if the $a_{jk}$'s satisfy \eqref{hyp:int-LL}, then they satisfy  also \eqref{hyp:int-LZ}: so,
a more general class of functions is considered. Again, Fourier transform, smoothing out the cofficients and
linking the approximation parameter with the dual variable were fundamental tools in the analysis of Tarama.
The improvement with respect to paper \cite{C-DG-S}, however, was obtained defining a new energy, which involved
(by differentiation in time) second derivatives of the approximated coefficients. Finally,
he got an estimate analogous to \eqref{est:c-loss}, which implies, in particular, well-posedness
in the space $H^\infty$.

In paper \cite{C-L}, Colombini and Lerner considered instead the case in which coefficients $a_{jk}$ 
depend both in time and in space variables.  In particular, they assumed an isotropic punctual log-Lipschitz
condition, i.e. there exists a constant $C>0$ such that, for all $\zeta=(\tau,\xi)\in\R\times\R^N$, $\zeta\neq0$, one has
$$
\sup_{z=(t,x)\in\R\times\R^N}\;\left|a_{jk}(z+\zeta)\,-\,a_{jk}(z)\right|\;\leq\;C\,|\zeta|\,
\log\left(1\,+\,\frac{1}{|\zeta|}\right)\,.
$$
Again, smoothing coefficients with respect to the time variable is required; on the contrary, one cannot use the Fourier transform, due to the dependence of $a_{jk}$ on $x$. The authors bypassed this problem taking advantage of Littlewood-Paley decomposition and paradifferential calculus.
Hence, they considered the energy concerning each localized part $\Delta_\nu u$ of
the solution $u$, and then they performed a weighted summation to put all these pieces together.
Also in this case, they had to consider
a different approximation of the coefficients in different zones of the phase space, which was obtained setting
$\veps=2^{-\nu}$ (recall that $2^\nu$ is the ``size'' of the frequencies in the $\nu$-th ring, see subsection \ref{ss:L-P}
below). In the end, they got the following statement: for all $s\in\,]0,1/4]$, there exist positive constants $\beta$ and $C_s$ and
a time $T^*\in\,]0,T]$ such that
\begin{eqnarray}
  &&\sup_{0\le t \le T^*} \biggl(\|u(t, \cdot)\|_{H^{-s+1-\beta t}}\,  +
 \|\partial_t u(t,\cdot)\|_{H^{-s-\beta t}}\biggr) \label{est:t-loss} \\
&&\qquad\qquad\qquad\qquad\qquad
\leq\,C_s\,\left(\|u(0, \cdot)\|_{H^{-s+1}}+
 \|\partial_t u(0,\cdot)\|_{H^{-s}} + \int_0^{T^*}  \|  L u(t,\cdot)\|_{H^{-s-\beta t}}\, dt\right) \nonumber
\end{eqnarray}
for all $u\in\mc{C}^2([0,T];H^\infty(\R^N))$.
Let us point out that the bound on $s$ was due to this reason: the
product by a log-Lipschitz function is well-defined in $H^s$ if and only if $|s|<1$. Note also that this fact gives us a bound
on the lifespan of the solution: the regularity index $-s+1-\beta T^*$ has to be strictly positive, so one can
expect only local in time existence of a solution.
Moreover in the case the coefficients $a_{jk}$ are $\mc{C}^\infty_b$ in space,
the authors proved inequality \eqref{est:t-loss} for all $s$: so,
they still got well-posedness in $H^\infty$, but \emph{with a loss of derivatives increasing in time}.

The case of a complete strictly hyperbolic second order operator,
$$
Lu\,=\,\sum_{j,k=0}^N\d_{y_j}\left(a_{jk}\,\d_{y_k}u\right)\,+\,
\sum_{j=0}^N\left(b_j\,\d_{y_j}u\,+\,\d_{y_j}\!\left(c_j\,u\right)\right)\,+\,d\,u
$$
(here we set $y=(t,x)\in\R_t\times\R^N_x$),
was considered by Colombini and M\'etivier in \cite{C-M}. They assumed the same isotropic log-Lipschitz condition of \cite{C-L}
on the coefficients of the second order part of $L$, while $b_j$ and $c_j$ were supposed to be $\alpha$-H\"older continuous
(for some $\alpha\in\,]1/2,1[\,$) and $d$ to be only bounded. The authors headed towards questions such as
local existence and uniqueness, and also finite propagation speed for local solutions.

Recently, Colombini and Del Santo, in \cite{C-DS} (for a first approach to the problem see also \cite{DS}, where smoothness
in space was required),
came back to the Cauchy problem for the operator \eqref{def:op}, mixing up
a Tarama-like hypothesis (concerning the dependence on the time variable) with the  one of Colombini and Lerner (with respect to $x$).
More precisely, they assumed a pointwise log-Zygmund condition in time and a pointwise log-Lipschitz condition in space,
uniformly with respect to the other variable (see relations \eqref{h:LZ_t} and \eqref{h:LL_x} below).
However, they had to restrict themselves to the case of
space dimension $N=1$: as a matter of fact, a Tarama-kind energy was somehow necessary to compensate the bad behaviour of
the coefficients with respect to $t$, but it was not clear how to define it in higher space dimensions.
Again, localizing energy by Littlewood-Paley decomposition and linking approximation parameter and dual variable lead to
an estimate analogous to \eqref{est:t-loss}.
%They also obtained a larger range for $s$: it was allowed to vary in the interval $]0,1/2[$.

The aim of the present paper is to extend the result of Colombini and Del Santo to any dimension $N\geq1$. As just pointed out,
the main difficulty was to define a suitable energy related to the solution. So, the first step
is to pass from functions $a(t,x)$ with low regularity modulus of continuity, to more general symbols $\sigma_a(t,x,\xi)$
(obviously related to the initial function $a$) satisfying the same hypothesis in $t$ and $x$, and then to
consider paradifferential operators associated to these symbols. Nevertheless, positivity hypothesis on $a$
(required for defining a strictly hyperbolic problem) does not translate, in general, to positivity of the corresponding operator,
which is fundamental in obtaining energy estimates. At this point, paradifferential calculus depending
on a parameter $\g\geq1$, defined and developed by M\'etivier in \cite{M-1986} (see also \cite{M-Z}),
comes into play and allows us to
recover positivity of the (new) paradifferential operator associated to $a$. Defining a localized energy and an
approximation of the coefficients depending on the dual variable are, once again, basic ingredients in closing estimates.
Hence, in the end we will get an inequality similar to \eqref{est:t-loss}, for any $s\in\,]0,1[$.

\medskip
The paper is organized as follows.

First of all, we will introduce the work hypothesis for our strictly hyperbolic problem, and we will state our main results.

Then, we will present the tools we need, all from Fourier Analysis. In particular, we will
recall Littlewood-Paley decomposition and some results about (classical) paradifferential calculus, as introduced first
by J.-M. Bony in the famous paper \cite{Bony}. We will need also to define a different class of Sobolev spaces, of
logarithmic type, as done in \cite{C-M}: they will come into play in our computations. Moreover, we will
present also paradifferential calculus depending on a parameter (which is basic in our analysis, as already pointed out),
as introduced in \cite{M-1986} and \cite{M-Z}.
A complete treatement about functions with low regularity modulus of continuity will end this section. In particular, we
will focus on log-Zygmund and log-Lipschitz conditions: taking
advantage of paradifferential calculus, we will establish some properties of functions satisfying such hypothesis.
Hence, we will pass to consider more general symbols and the
associated paradifferential operators, for which we will develop also a symbolic calculus and we will state a
fundamental positivity estimate.

This having been done, we will be then ready to tackle the proof of our main result, for which we will go back to the main ideas of
paper \cite{C-DS}. First of all, by use of a convolution kernel, we will smooth out the coefficients, but with respect to
the time variable only. As a matter of facts, low regularity in $x$ will be compensated by considering paradifferential
operators associated to our coefficients. Then, we will decompose the solution $u$ to the Cauchy problem for \eqref{def:op}
into dyadic blocks $\Delta_\nu u$, for which we will define an approximate localized energy $e_\nu$: the dependence on
the approximation parameter $\veps$ will be linked to the phase space localization, setting $\veps=2^{-\nu}$. The piece of energy
$e_\nu$ will be of Tarama type, but this time multiplication by functions will be replaced by action of paradifferential
operators associated to them. A weighted summation of these pieces will define the total energy $E(t)$ associated to $u$.
The rest of the proof is classical: we will differentiate $E$ with respect to time and, using Gronwall Lemma, we will get
a control for it in terms of the initial energy $E(0)$ and the external force $Lu$ only.

\subsection*{Acknowledgments}
The work was mostly prepared during the Ph.D. thesis of the third author at SISSA and Universit\'e Paris-Est: he is grateful to both these institutions. During the final part, the third author was partially supported by Grant MTM2011-29306-C02-00, MICINN, Spain, ERC Advanced Grant FP7-246775 NUMERIWAVES, ESF Research Networking Programme OPTPDE and Grant PI2010-04 of the Basque Government.

\section{Basic definitions and main result} \label{s:results}

This section is devoted to the presentation of our work setting and of our main results.

Let us consider the operator over $[0,T_0]\times\R^N$ (for some $T_0>0$ and $N\geq1$)
\begin{equation} \label{eq:op}
 Lu\,=\,\d^2_tu\,-\,\sum_{i,j=1}^N\d_i\left(a_{ij}(t,x)\,\d_ju\right)\,,
\end{equation}
and let us suppose $L$ to be strictly hyperbolic with bounded coefficients, i.e. there exist two positive constants
$0<\lambda_0\leq\Lambda_0$ such that, for all $(t,x)\in\mbb{R}_t\times\mbb{R}^N_x$ and all $\xi\in\mbb{R}^N$, one has
\begin{equation} \label{h:hyp}
 \lambda_0\,|\xi|^2\,\leq\,\sum_{i,j=1}^N a_{ij}(t,x)\,\xi_i\,\xi_j\,\leq\,\Lambda_0\,|\xi|^2\,.
\end{equation}

Moreover, let us suppose the coefficients to be log-Zygmund continuous in the time variable $t$, uniformly with respect to $x$, and
log-Lipschitz continuous in the space variables, uniformly with respect to $t$. This hypothesis reads as follow: there exists a
constant $K_0$ such that, for all $\tau>0$ and all $y\in\mbb{R}^N\setminus\{0\}$, one has
\begin{eqnarray}
 \sup_{(t,x)}\left|a_{ij}(t+\tau,x)+a_{ij}(t-\tau,x)-2a_{ij}(t,x)\right| & \leq &
K_0\,\tau\,\log\left(1\,+\,\frac{1}{\tau}\right) \label{h:LZ_t} \\
 \sup_{(t,x)}\left|a_{ij}(t,x+y)-a_{ij}(t,x)\right| & \leq & K_0\,|y|\,\log\left(1\,+\,\frac{1}{|y|}\right). \label{h:LL_x}
\end{eqnarray}

Now, let us state our main result, i.e. an energy estimate for the operator \eqref{eq:op}.

\begin{theo} \label{t:en-est}
 Let us consider the operator $L$ defined in \eqref{eq:op}, and let us suppose $L$ to be strictly hyperbolic, i.e.
relation \eqref{h:hyp} holds true. Moreover, let us suppose that coefficients $a_{ij}$ satisfy both conditions
\eqref{h:LZ_t} and \eqref{h:LL_x}.

Then, for all fixed $\theta\in\,]0,1[\,$, there exist a $\beta^*>0$, a time $T>0$ and a constant $C>0$ such that the following
estimate,
\begin{eqnarray}
 &&\sup_{0\le t \le T} \biggl(\|u(t, \cdot)\|_{H^{-\theta+1-\beta^* t}}\,  +
 \|\partial_t u(t,\cdot)\|_{H^{-\theta-\beta^* t}}\biggr) \label{est:thesis} \\
&&\qquad\qquad\qquad\qquad\qquad
\leq\,C\,\left(\|u(0, \cdot)\|_{H^{-\theta+1}}+
 \|\partial_t u(0,\cdot)\|_{H^{-\theta}} + \int_0^{T}  \|  L u(t,\cdot)\|_{H^{-\theta-\beta^* t}}\, dt\right)\,, \nonumber
\end{eqnarray}
holds true for all $u\in\mc{C}^2([0,T];H^\infty(\R^N))$.
\end{theo}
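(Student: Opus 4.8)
The plan is to follow the Littlewood--Paley + Tarama-energy strategy outlined in the introduction, replacing multiplication by the low-regularity coefficients $a_{ij}$ with the action of the parameter-dependent paradifferential operators associated to them. First I would reduce the problem: fix $\theta\in\,]0,1[$, smooth the coefficients in time only, setting $a_{ij,\veps}(t,x) = (a_{ij}(\cdot,x)*\rho_\veps)(t)$, and record the standard consequences of the log-Zygmund hypothesis \eqref{h:LZ_t}: $|a_{ij,\veps}-a_{ij}|\lesssim \veps\log(1+1/\veps)$, $|\d_t a_{ij,\veps}|\lesssim \log(1+1/\veps)$ and $|\d_t^2 a_{ij,\veps}|\lesssim \veps^{-1}$, together with the log-Lipschitz control \eqref{h:LL_x} in $x$. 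Then I would pass to the associated symbols $\sigma_{a_{ij}}$ and use the parameter-dependent paradifferential calculus of \cite{M-1986,M-Z} (as developed in the earlier sections of this paper) to obtain, for a suitable parameter $\g\geq1$, a paradifferential operator $A_\veps = \sum_{i,j}\d_i T^\g_{\sigma_{a_{ij,\veps}}}\d_j$ which is self-adjoint up to lower-order terms and, crucially, \emph{positive}: $(A_\veps v,v)\gtrsim \lambda_0\|v\|_{H^1}^2 - C\g\|v\|_{L^2}^2$. This positivity, which classical paradifferential calculus does not give, is exactly what the parameter $\g$ is introduced for, and it is the substitute for the ellipticity bound $\sum a_{ij}\xi_i\xi_j\geq\lambda_0|\xi|^2$ used in the smooth-coefficient case.

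Next I would localize: apply $\Delta_\nu$ to $Lu=f$, set $u_\nu=\Delta_\nu u$, and choose the approximation parameter $\veps=2^{-\nu}$ so that on the $\nu$-th dyadic block $\log(1+1/\veps)\approx\nu$ and $\veps^{-1}\approx 2^\nu$. For each $\nu$ I would define a Tarama-type localized energy
\begin{equation*}
e_\nu(t)\,=\,\bigl(A_{\veps}\,u_\nu,u_\nu\bigr)\,+\,\bigl\|\d_t u_\nu\,-\,\tfrac12(\d_t\log\text{-term})\,u_\nu\bigr\|_{L^2}^2\,+\,\text{correction terms},
\end{equation*}
the point of the correction (as in Tarama \cite{Tar} and Colombini--Del Santo \cite{C-DS}) being that upon differentiating in $t$ the term $(\d_t A_\veps u_\nu,u_\nu)$, which is only $O(\nu)$-bounded, gets paired against a term coming from $\d_t^2 A_\veps$, which is $O(2^\nu)$, so that the worst contributions cancel and only a $\nu$-sized (i.e. logarithmic) net loss remains. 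I would then form the global weighted energy $E(t)=\sum_\nu 2^{2\nu(-\theta+1-\beta t)} \,e_\nu(t)$, with a time-dependent weight whose exponent decreases linearly in $t$; differentiating, the $-\d_t$ of the weight produces a gain $-\beta\log 2 \sum \nu\,(\cdots)$ that absorbs precisely the logarithmic loss, provided $\beta=\beta^*$ is chosen large enough relative to the constant $K_0$.

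After that the argument is the classical Gronwall scheme: one shows $\frac{d}{dt}E(t)\leq C\,E(t)+C\|f(t)\|_{H^{-\theta-\beta t}}^2$ (using the symbolic calculus to commute $A_\veps$ past $\Delta_\nu$ and to control the remainders, the consumption of a derivative by $\d_t a_{ij,\veps}$ being paid for by the $\veps^{-1}\sim 2^\nu$ versus the weight shift, and the log-Lipschitz-in-$x$ errors being handled with the logarithmic Sobolev spaces of \cite{C-M}), together with the two-sided equivalence $E(t)\approx \|u(t)\|_{H^{-\theta+1-\beta t}}^2+\|\d_t u(t)\|_{H^{-\theta-\beta t}}^2$ — this last equivalence again relying on the positivity estimate for $A_\veps$ and is valid for $|{-\theta+1-\beta t}|<1$ and $|{-\theta-\beta t}|<1$, which forces the time interval $[0,T]$ to be small (here one uses $\theta\in\,]0,1[$). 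Integrating in time and taking square roots yields \eqref{est:thesis}. The main obstacle, and the technical heart of the paper, is the construction and analysis of the parameter-dependent paradifferential operator $A_\veps$: establishing its self-adjointness and positivity with constants uniform in $\nu$, controlling the time-derivatives $\d_t A_\veps$ and $\d_t^2 A_\veps$ at the right orders, and verifying that the Tarama-type corrector does produce the needed cancellation in this operator-valued setting rather than the scalar (Fourier-multiplier) setting of \cite{Tar}; everything else is a careful but routine adaptation of \cite{C-DS} and \cite{C-L}.
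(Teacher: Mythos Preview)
Your plan is the paper's plan: time-mollify the $a_{ij}$, use parameter-dependent paradifferential calculus to secure positivity, localize dyadically with $\veps=2^{-\nu}$, build a Tarama-type energy out of paradifferential operators, sum with time-decreasing Sobolev weights to absorb the logarithmic loss, and close by Gronwall. Two points deserve correction, though, because they touch the technical heart.

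First, under the log-Zygmund (not log-Lipschitz) hypothesis in $t$ one only gets $|\d_t a_{ij,\veps}|\lesssim\log^2(1+1/\veps)$ and $|\d_t^2 a_{ij,\veps}|\lesssim\veps^{-1}\log(1+1/\veps)$ (Proposition~\ref{p:LZ-reg}); the $\log^2$ in the first estimate is exactly why the Tarama modification is indispensable here, whereas a single $\log$ would already be absorbed by the Colombini--Lerner scheme without it.

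Second, your description of the cancellation (``$\d_t A_\veps$ paired against $\d_t^2 A_\veps$'') is not the mechanism. The paper introduces the $0$-th order symbol $\alpha_\veps=(\gamma^2+|\xi|^2)^{-1/2}(\gamma^2+\sum a_{ij,\veps}\xi_i\xi_j)^{1/2}$ and sets
\[
v_\nu\,=\,T_{\alpha^{-1/2}}\d_tu_\nu\,-\,T_{\d_t(\alpha^{-1/2})}u_\nu\,,\qquad
w_\nu\,=\,T_{\alpha^{1/2}(\gamma^2+|\xi|^2)^{1/2}}u_\nu\,,\qquad
e_\nu\,=\,\|v_\nu\|_{L^2}^2+\|w_\nu\|_{L^2}^2+\|u_\nu\|_{L^2}^2\,.
\]
The first cancellation is built into $v_\nu$: one has $\d_t v_\nu=T_{\alpha^{-1/2}}\d_t^2u_\nu-T_{\d_t^2(\alpha^{-1/2})}u_\nu$, so no first time-derivative of $\alpha$ survives there. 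The second cancellation happens in $\d_t\|w_\nu\|^2$: the term $2\Re\bigl(T_{\d_t(\alpha^{1/2})(\gamma^2+|\xi|^2)^{1/2}}u_\nu,w_\nu\bigr)$ is rewritten, via the algebraic identity $\d_t(\alpha^{1/2})=-\alpha\,\d_t(\alpha^{-1/2})$ and repeated use of symbolic calculus (Proposition~\ref{p:symb-calc}), so that it combines with $2\Re\bigl(T_{\alpha^{1/2}(\gamma^2+|\xi|^2)^{1/2}}\d_tu_\nu,w_\nu\bigr)$ to reconstitute $v_\nu$; the resulting principal term then matches the principal term of $\d_t\|v_\nu\|^2$ through the equation. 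What remains are the $T_{\d_t^2(\alpha^{-1/2})}u_\nu$ contribution, the symbolic-calculus remainders $R_1,\dots,R_5$, the commutator $[\Delta_\nu,T_{a_{ij}}]$, and the paraproduct defect $a_{ij}-T_{a_{ij}}$ (handled in $H^{-\theta-\beta^*t-\frac12\log}$ via \cite{C-M}), each contributing $O\bigl((\nu+1)\,e_\nu\bigr)$. Also note the correct weight is $e^{-2\beta(\nu+1)t}2^{-2\nu\theta}$, not $2^{2\nu(-\theta+1-\beta t)}$; with your weight the $\d_tu$ norm would land in the wrong Sobolev index.
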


%\begin{rem} \label{r:thesis}
% From our proof, it turns out that one can get a more accurate statement:
%\smallbreak
%\emph{for all fixed $\theta\in\,]0,1[$, one can find a constant $\uline{\beta^*}>0$ for which the following fact holds:
%for all $\beta^*\geq\uline{\beta^*}$, there exist a time $T>0$ and a real number $C>0$ such that the product $\beta^*\,T$
%remains constant and inequality \eqref{est:thesis} is verified}.
%\end{rem}

So, it's possible to control the Sobolev norms of solutions to \eqref{eq:op} in terms of those of initial data and of
the external force only: the price to pay is a loss of derivatives, increasing (linearly) in time. 

%Therefore, if
%the coefficients of the operators are smooth enough, Theorem \ref{t:en-est} implies (in a classical way) the
%well-posedness issue in $H^\infty$.

%\begin{coroll} \label{c:w-p}
 %Let us consider the strictly hyperbolic operator \eqref{eq:op}, under assumptions \eqref{h:hyp} and \eqref{h:LZ_t}.
%Let us also assume the coefficients $a_{jk}$ to be $\mc{C}^\infty_b$ with respect to the space variables.

%Then the Cauchy problem for $L$ is well-posed in the space $H^\infty$.
%\end{coroll}

\section{Tools}

In this section we will introduce the main tools, basically from Fourier Analysis, we will need to prove our statement.

First of all, we will recall classical Littlewood-Paley decomposition and some basic results
on dyadic analysis. By use of it, we will also define a different class of Sobolev spaces, of logarithmic type. \\
Then, we will need to introduce a paradifferential calculus depending on some parameter $\g\geq1$: the main ideas
are the same of the classical version, but the presence of the parameter allows us to perform a more refined analysis.
This will play a crucial role to get our result. \\
After this, we will consider functions with low regularity modulus of continuity. In particular, we will
focus on log-Zygmund and log-Lipschitz functions: dyadic decomposition allows us to get some of their properties. Moreover,
we will analyse the convolution of a log-Zygmund function by a smoothing kernel. \\
Finally, taking advantage of paradifferential calculus with parameters, we will consider general symbols having low regularity in time and space variables. Under suitable hypothesis on such a symbol, we will also
get positivity estimates for the associated operator.

\subsection{Littlewood-Paley decomposition} \label{ss:L-P}

Let us first define the so called ``Littlewood-Paley decomposition'', based on a non-homogeneous dyadic partition of unity with
respect to the Fourier space variable. We refer to \cite{B-C-D}, \cite{Bony} and \cite{M-2008}
for the details.

So, fix a smooth radial function
$\chi$ supported in the ball $B(0,2),$ 
equal to $1$ in a neighborhood of $B(0,1)$
and such that $r\mapsto\chi(r\,e)$ is nonincreasing
over $\R_+$ for all unitary vectors $e\in\R^N$. Set also
$\varphi\left(\xi\right)=\chi\left(\xi\right)-\chi\left(2\xi\right)$.

For convenience, we immediately introduce the following notation:
$$
\chi_j(\xi)\,:=\,\chi(2^{-j}\xi)\qquad\mbox{ and }\qquad \vphi_j(\xi)\,:=\,\vphi(2^{-j}\xi)\,.
$$
We will indifferently use it or the previous one.

\smallbreak
The dyadic blocks $(\Delta_j)_{j\in\Z}$
 are defined by\footnote{Throughout we agree  that  $f(D)$ stands for 
the pseudo-differential operator $u\mapsto\mc{F}^{-1}(f\,\mc{F}u)$.} 
$$
\Delta_j:=0\ \hbox{ if }\ j\leq-1,\quad\Delta_{0}:=\chi(D)\quad\hbox{and}\quad
\Delta_j:=\varphi(2^{-j}D)\ \text{ if }\  j\geq1.
$$
We  also introduce the following low frequency cut-off operator:
$$
S_j\,:=\,\chi(2^{-j}D)\,=\,\sum_{k\leq j}\Delta_{k}\quad\text{for}\quad j\geq0.
$$

The following classical properties will be freely used throughout the paper:
\begin{itemize}
\item for any $u\in\mc{S}',$ the equality $u=\sum_{j}\Delta_ju$ holds true in $\mc{S}'$;
\item for all $u$ and $v$ in $\mc{S}'$,
the sequence $\left(S_{j-3}u\,\,\Delta_jv\right)_{j\in\N}$ is spectrally supported in dyadic annuli.
\end{itemize}

Let us also mention a fundamental result, which explains, by the so-called \emph{Bernstein's inequalities},
the way derivatives act on spectrally localized functions.
  \begin{lemma} \label{l:bern}
Let  $0<r<R$.   A
constant $C$ exists so that, for any multi-index $\alpha\in\N^N$, any couple $(p,q)$ 
in $[1,+\infty]^2$ with  $p\leq q$ 
and any function $u\in L^p$,  we  have, for all $\lambda>0$,
$$
\displaylines{
{\rm supp}\, \widehat u \subset   B(0,\lambda R)\quad
\Longrightarrow\quad
\|\d^\alpha u\|_{L^q}\, \leq\,
 C^{|\alpha|+1}\,\lambda^{|\alpha|+N\left(\frac{1}{p}-\frac{1}{q}\right)}\,\|u\|_{L^p}\;;\cr
{\rm supp}\, \widehat u \subset \{\xi\in\R^N\,|\, r\lambda\leq|\xi|\leq R\lambda\}
\quad\Longrightarrow\quad C^{-|\alpha|-1}\lambda^{|\alpha|}\|u\|_{L^p}\,
\leq\,\|\d^\alpha u\|_{L^p}\,\leq\,
C^{|\alpha|+1}  \lambda^{|\alpha|}\|u\|_{L^p}\,.
}$$
\end{lemma}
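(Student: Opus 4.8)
\textbf{Proof strategy for Lemma \ref{l:bern} (Bernstein's inequalities).}

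The plan is to reduce both implications to a single elementary fact: if $\widehat u$ is supported in a fixed ball $B(0,R)$, then $u = \psi(D)u$ for any smooth compactly supported $\psi$ equal to $1$ on $B(0,R)$, so that $u = \mc{F}^{-1}\psi * u$ is a convolution against a fixed Schwartz function, and one applies Young's inequality. First I would prove the case $\lambda = 1$ and then recover the general case by the rescaling $u_\lambda(x) := u(x/\lambda)$, which sends $\Supp\widehat{u_\lambda}$ from $B(0,R)$ to $B(0,\lambda R)$ and transforms the $L^p$ norms and the derivatives in the expected homogeneous way; tracking the powers of $\lambda$ gives exactly the exponent $|\alpha| + N(1/p - 1/q)$ in the first estimate and $|\alpha|$ in the second.

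For the first implication, fix a function $\psi\in\mc{C}^\infty_c(\R^N)$ with $\psi\equiv 1$ on $B(0,R)$ and $\Supp\psi\subset B(0,2R)$; then for $u\in L^p$ with $\Supp\widehat u\subset B(0,R)$ we have $\d^\alpha u = \d^\alpha(\mc{F}^{-1}\psi)*u$ pointwise, hence by Young's inequality
$$
\|\d^\alpha u\|_{L^q}\,\leq\,\|\d^\alpha(\mc{F}^{-1}\psi)\|_{L^r}\,\|u\|_{L^p}\,,\qquad \frac1q\,=\,\frac1r\,+\,\frac1p\,-\,1\,,
$$
and since $\mc{F}^{-1}\psi\in\mc{S}(\R^N)$ the norm $\|\d^\alpha(\mc{F}^{-1}\psi)\|_{L^r}$ is finite; a routine check that it is bounded by $C^{|\alpha|+1}$ (for instance using $\d^\alpha(\mc{F}^{-1}\psi)=\mc{F}^{-1}(i^{|\alpha|}\xi^\alpha\psi)$ together with $\|\mc{F}^{-1}g\|_{L^r}\lesssim\|g\|_{W^{N+1,1}}$ and the fact that $\xi^\alpha\psi$ has support of fixed size) gives the stated constant. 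Rescaling then yields the general $\lambda$.

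For the second implication, the upper bound is just the first implication with $p=q$ (the annulus is contained in a ball of radius $R\lambda$). For the lower bound, choose $\widetilde\varphi\in\mc{C}^\infty_c(\R^N\setminus\{0\})$ with $\widetilde\varphi\equiv 1$ on the annulus $\{r\leq|\xi|\leq R\}$; on that annulus the function $|\xi|^{-2|\alpha|}(i\xi)^{\alpha}\cdot\overline{(i\xi)^{\alpha}}$ equals $|\xi^\alpha|^2|\xi|^{-2|\alpha|}$, which is not convenient, so instead I would use the standard device of writing, for $\widehat u$ supported in the annulus of radius $1$,
$$
u\,=\,\sum_{|\beta|=|\alpha|}\,c_\beta\,\d^\beta g_\beta\,,\qquad g_\beta\,:=\,\mc{F}^{-1}\!\left(\frac{(-i\xi)^{\beta}}{|\xi|^{2|\alpha|}}\,\widetilde\varphi(\xi)\,\widehat u(\xi)\right)\,=\,h_\beta * u\,,
$$
where $h_\beta=\mc{F}^{-1}\big((-i\xi)^\beta|\xi|^{-2|\alpha|}\widetilde\varphi(\xi)\big)\in\mc{S}(\R^N)$ (the singularity at the origin is killed by $\widetilde\varphi$), and $c_\beta$ are the multinomial coefficients coming from $\big(\sum_j\xi_j^2\big)^{|\alpha|}=\sum_{|\beta|=|\alpha|}c_\beta\,\xi^{2\beta}$ after reorganizing — the point being that $\sum_{|\beta|=|\alpha|}c_\beta(-i\xi)^\beta(i\xi)^\beta/|\xi|^{2|\alpha|}\equiv 1$. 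Then Young's inequality gives $\|u\|_{L^p}\leq\big(\sum_\beta|c_\beta|\,\|h_\beta\|_{L^1}\big)\max_{|\beta|=|\alpha|}\|\d^\beta u\|_{L^p}$, which after rescaling and absorbing the combinatorial factors into $C^{|\alpha|+1}$ is exactly the claimed lower bound (up to replacing a single $\alpha$ by a maximum over $|\beta|=|\alpha|$, which is harmless since any fixed $\d^\alpha u$ with $|\alpha|$ fixed can be similarly compared, or one simply states the lemma for the family of all such derivatives as is customary).

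The only mildly delicate point — and the one I would be most careful about — is the uniform control of the Schwartz-norms of the auxiliary kernels $\mc{F}^{-1}\psi$, $h_\beta$ so as to obtain the exponential-in-$|\alpha|$ constants $C^{|\alpha|+1}$ rather than something like $|\alpha|!$; this is handled by fixing \emph{once and for all} the cut-offs $\psi,\widetilde\varphi$ (independent of $\alpha$), noting that differentiating them $|\alpha|$ times and taking the inverse Fourier transform produces, after the change of variables, a factor growing at most geometrically in $|\alpha|$ because each derivative lands on a function supported in a set of \emph{fixed} size. Everything else is bookkeeping of the rescaling $u\mapsto u(\cdot/\lambda)$.
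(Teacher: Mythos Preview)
The paper does not prove this lemma; it is quoted as a classical fact from the Littlewood--Paley literature (the references \cite{B-C-D}, \cite{M-2008} at the start of the subsection). Your argument is exactly the standard textbook proof (compare Lemma~2.1 in \cite{B-C-D}): reduce to $\lambda=1$ by scaling, write $\d^\alpha u$ as convolution with a fixed Schwartz kernel and apply Young for the upper bound, and for the lower bound reconstruct $u$ from its order-$|\alpha|$ derivatives via the multinomial identity $\sum_{|\beta|=|\alpha|}\binom{|\alpha|}{\beta}\xi^{2\beta}=|\xi|^{2|\alpha|}$ combined with a cut-off supported away from the origin. So there is nothing to contrast.

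One point deserves correction. Your parenthetical remark that ``any fixed $\d^\alpha u$ with $|\alpha|$ fixed can be similarly compared'' is not true in dimension $N\geq2$: for $\alpha=(k,0,\dots,0)$, a function $u$ with $\widehat u$ concentrated near $(0,r,0,\dots,0)$ in the annulus has $\d^\alpha u$ arbitrarily small while $\|u\|_{L^p}$ stays bounded below, so the lower bound \emph{for a single} $\alpha$ fails. What your argument actually yields --- and what is meant in the lemma, despite the slightly loose phrasing --- is
$$
C^{-|\alpha|-1}\lambda^{|\alpha|}\|u\|_{L^p}\,\leq\,\max_{|\beta|=|\alpha|}\|\d^\beta u\|_{L^p}\,,
$$
i.e.\ a bound by the full family of derivatives of order $|\alpha|$. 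Your second alternative (``one simply states the lemma for the family of all such derivatives as is customary'') is therefore the correct reading; drop the first one.
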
   

Let us recall the characterization of (classical) Sobolev spaces via dyadic decomposition:
for all $s\in\mbb{R}$ there exists a constant $C_s>0$ such that
\begin{equation} \label{est:dyad-Sob}
 \frac{1}{C_s}\,\,\sum^{+\infty}_{\nu=0}2^{2\,\nu\, s}\,\|u_\nu\|^2_{L^2}\;\leq\;\|u\|^2_{H^s}\;\leq\;
C_s\,\,\sum^{+\infty}_{\nu=0}2^{2\,\nu\, s}\,\|u_\nu\|^2_{L^2}\,,
\end{equation}
where we have set $u_\nu:=\Delta_\nu u$.

So, the $H^s$ norm of a tempered distribution is the same as the $\ell^2$ norm of the sequence
$\left(2^{s\nu}\,\left\|\Delta_\nu u\right\|_{L^2}\right)_{\nu\in\N}$. Now, one may ask what we get if, in the sequence, we put
weights different to the exponential term $2^{s\nu}$. Before answering this question, we introduce some definitions. For the
details of the presentation, we refer also to \cite{C-M}.

Let us set $\Pi(D)\,:=\,\log(2+|D|)$, i.e. its symbol is $\pi(\xi)\,:=\,\log(2+|\xi|)$.
\begin{defin} \label{d:log-H^s}
 For all $\alpha\in\R$, we define the space $H^{s+\alpha\log}$ as the space $\Pi^{-\alpha}H^s$, i.e.
$$
f\,\in\,H^{s+\alpha\log}\quad\lra\quad\Pi^\alpha f\,\in\,H^s\quad\lra\quad
\pi^\alpha(\xi)\left(1+|\xi|^2\right)^{s/2}\what{f}(\xi)\,\in\,L^2\,.
$$
\end{defin}

From the definition, it's obvious that the following inclusions hold true for any $s_1>s_2$ and any $\alpha_1>\alpha_2>0$:
$$
H^{s_1+\alpha_1\log}\;\hra\;H^{s_1+\alpha_2\log}\;\hra\;H^{s_1}\;\hra\;
H^{s_1-\alpha_2\log}\;\hra\;H^{s_1-\alpha_1\log}\;\hra\;H^{s_2}\,.
$$

We have the following dyadic characterization of these spaces (see \cite[Prop. 4.1.11]{M-2008}).
\begin{prop} \label{p:log-H}
 Let $s$, $\alpha\,\in\R$. A tempered distribution $u$ belongs to the space $H^{s+\alpha\log}$ if and only if:
\begin{itemize}
 \item[(i)] for all $k\in\N$, $\Delta_ku\in L^2(\R^N)$;
\item[(ii)] set $\,\delta_k\,:=\,2^{ks}\,(1+k)^\alpha\,\|\Delta_ku\|_{L^2}$ for all $k\in\N$, the sequence
$\left(\delta_k\right)_k$ belongs to $\ell^2(\N)$.
\end{itemize}
Moreover, $\|u\|_{H^{s+\alpha\log}}\,\sim\,\left\|\left(\delta_k\right)_k\right\|_{\ell^2}$.
\end{prop}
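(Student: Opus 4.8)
\textbf{Plan for the proof of Proposition \ref{p:log-H}.}
The strategy is to reduce the claim about $H^{s+\alpha\log}$ to the already-known dyadic characterization of the ordinary Sobolev space $H^s$ recorded in \eqref{est:dyad-Sob}, by pushing the operator $\Pi^\alpha=\log^\alpha(2+|D|)$ through the Littlewood-Paley decomposition. By Definition \ref{d:log-H^s}, $u\in H^{s+\alpha\log}$ means exactly $\Pi^\alpha u\in H^s$. Applying \eqref{est:dyad-Sob} to $\Pi^\alpha u$, this is equivalent to: for every $k$, $\Delta_k(\Pi^\alpha u)\in L^2$, and the sequence $\bigl(2^{ks}\|\Delta_k(\Pi^\alpha u)\|_{L^2}\bigr)_k\in\ell^2$, with $\|u\|_{H^{s+\alpha\log}}=\|\Pi^\alpha u\|_{H^s}\sim\bigl\|(2^{ks}\|\Delta_k(\Pi^\alpha u)\|_{L^2})_k\bigr\|_{\ell^2}$. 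So everything comes down to comparing $\|\Delta_k(\Pi^\alpha u)\|_{L^2}$ with $(1+k)^\alpha\|\Delta_k u\|_{L^2}$.

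The key point is that $\Pi^\alpha$ is a Fourier multiplier by $\pi^\alpha(\xi)=\log^\alpha(2+|\xi|)$, and $\Delta_k$ is the multiplier by $\varphi(2^{-k}\xi)$ (resp. $\chi$ for $k=0$), supported in the annulus $\{c\,2^k\le|\xi|\le C\,2^k\}$. On that annulus one has, for $k\ge1$,
\begin{equation*}
\pi^\alpha(\xi)\;=\;\log^\alpha(2+|\xi|)\;\sim\;(k\log 2)^\alpha\;\sim\;(1+k)^\alpha,
\end{equation*}
with constants depending only on $\alpha$ (and on $r,R$ in Lemma \ref{l:bern}); more precisely one checks that $\pi^\alpha(\xi)/(1+k)^\alpha$ and its reciprocal are bounded, uniformly in $\xi$ in the support of $\varphi(2^{-k}\cdot)$ and uniformly in $k$. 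The cleanest way to turn this into an $L^2$ estimate is to write $\Delta_k(\Pi^\alpha u)=\theta_k(D)\Delta_k u$ where $\theta_k(\xi):=\pi^\alpha(\xi)\,\widetilde\varphi(2^{-k}\xi)$ with $\widetilde\varphi$ a fattened bump equal to $1$ on $\Supp\varphi$ (so $\widetilde\varphi(2^{-k}D)\Delta_k=\Delta_k$), and then factor $\theta_k(\xi)=(1+k)^\alpha\,m_k(\xi)$ where $m_k(\xi):=(1+k)^{-\alpha}\pi^\alpha(2^k\eta)\,\widetilde\varphi(\eta)$ evaluated at $\eta=2^{-k}\xi$. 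One verifies that the family $(m_k)_k$ has all $\xi$-derivatives bounded uniformly in $k$ after the rescaling $\eta=2^{-k}\xi$, so by the Mikhlin–Hörmander multiplier theorem (or simply by taking inverse Fourier transforms: $m_k(D)=$ convolution with an $L^1$ kernel of norm bounded uniformly in $k$) one gets $\|m_k(D)v\|_{L^2}\le C\|v\|_{L^2}$ with $C$ independent of $k$, and the same for the inverse multiplier $1/m_k$. Hence
\begin{equation*}
\tfrac1C\,(1+k)^\alpha\,\|\Delta_k u\|_{L^2}\;\le\;\|\Delta_k(\Pi^\alpha u)\|_{L^2}\;\le\;C\,(1+k)^\alpha\,\|\Delta_k u\|_{L^2},
\end{equation*}
for all $k\ge1$, and the case $k=0$ is trivial since there $\pi^\alpha$ is bounded above and below on $\Supp\chi$.

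Combining the two displays, $\bigl(2^{ks}\|\Delta_k(\Pi^\alpha u)\|_{L^2}\bigr)_k\in\ell^2$ if and only if $\bigl(2^{ks}(1+k)^\alpha\|\Delta_k u\|_{L^2}\bigr)_k=(\delta_k)_k\in\ell^2$, and the two $\ell^2$ norms are comparable; likewise $\Delta_k(\Pi^\alpha u)\in L^2$ for all $k$ iff $\Delta_k u\in L^2$ for all $k$. This gives the stated equivalence (i)–(ii) together with $\|u\|_{H^{s+\alpha\log}}\sim\|(\delta_k)_k\|_{\ell^2}$, via the chain $\|u\|_{H^{s+\alpha\log}}=\|\Pi^\alpha u\|_{H^s}\sim\|(2^{ks}\|\Delta_k(\Pi^\alpha u)\|_{L^2})_k\|_{\ell^2}\sim\|(\delta_k)_k\|_{\ell^2}$. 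The only mildly technical step is the uniform-in-$k$ bound on the rescaled multipliers $m_k$ and $1/m_k$; this is where the specific form $\pi(\xi)=\log(2+|\xi|)$ matters, since $\log(2+2^k\eta)$ over $\eta$ in a fixed annulus is, after dividing by $k$, a smooth function converging to the constant $\log 2$ with all derivatives going to $0$, so the needed uniform symbol bounds are immediate. I expect no genuine obstacle here — the result is essentially bookkeeping around \eqref{est:dyad-Sob}.
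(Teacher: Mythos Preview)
Your argument is correct; the only comment is that the detour through Mikhlin--H\"ormander is unnecessary: since everything happens in $L^2$, Plancherel gives directly
\[
\|\Delta_k(\Pi^\alpha u)\|_{L^2}\;=\;\bigl\|\pi^\alpha(\xi)\,\vphi_k(\xi)\,\what u(\xi)\bigr\|_{L^2}\;\sim\;(1+k)^\alpha\,\|\Delta_k u\|_{L^2},
\]
because $\pi^\alpha(\xi)/(1+k)^\alpha$ is pointwise bounded above and below on $\Supp\vphi_k$, uniformly in $k$. No kernel estimates or symbol bounds on derivatives are needed.

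As for comparison with the paper: the paper does not actually prove Proposition~\ref{p:log-H} but simply refers to \cite[Prop.~4.1.11]{M-2008}. Your sketch is precisely the kind of argument one finds behind that reference, so there is no discrepancy in approach --- you have just filled in what the paper leaves to a citation.
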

Hence, this proposition generalizes property \eqref{est:dyad-Sob}.

This new class of Sobolev spaces, which are in a certain sense of logarithmic type, will come into play in our
analysis. As a matter of fact, operators associated to log-Zygmund or log-Lipschitz symbols
give a logarithmic loss of derivatives.
We will clarify in a while what we have just said; first of all, we
need to introduce a new version of paradifferential calculus, depending on a parameter $\g\geq1$.

\subsection{Paradifferential calculus with parameters}

Let us present here the paradifferential calculus depending on some parameter $\g$. One can find a complete
and detailed treatement in Appendix B of \cite{M-Z} (see also \cite{M-1986}).

Fix $\gamma\geq1$ and take a cut-off function $\psi\in\mc{C}^\infty(\R^N\times\R^N)$ which verifies the following properties:
\begin{itemize}
 \item there exist $0<\veps_1<\veps_2<1$ such that
$$
\psi(\eta,\xi)\,=\,\left\{\begin{array}{lcl}
                           1 & \mbox{for} & |\eta|\leq\veps_1\left(\gamma+|\xi|\right) \\ [1ex]
			   0 & \mbox{for} & |\eta|\geq\veps_2\left(\gamma+|\xi|\right)\,;
                          \end{array}
\right.
$$
\item for all $(\beta,\alpha)\in\N^N\times\N^N$, there exists a constant $C_{\beta,\alpha}$ such that
$$
\left|\d^\beta_\eta\d^\alpha_\xi\psi(\eta,\xi)\right|\,\leq\,C_{\beta,\alpha}\left(\gamma+|\xi|\right)^{-|\alpha|-|\beta|}\,.
$$
\end{itemize}

\begin{rem} \label{r:indep_gamma}
 We remark that $\veps_1$, $\veps_2$ and the different $C_{\beta,\alpha}$ occurring in the previous estimates
must not depend on $\g$.
\end{rem}

For instance, if $\gamma=1$, one can take
$$
\psi(\eta,\xi)\,\equiv\,\psi_{-3}(\eta,\xi)\,:=\,\sum_{k=0}^{+\infty}\chi_{k-3}(\eta)\,\vphi_k(\xi)\,,
$$
where $\chi$ and $\vphi$ are the localization (in phase space) functions associated to a Littlewood-Paley decomposition,
see \cite[Ex. 5.1.5]{M-2008}.
Similarly, if $\gamma>1$ it is possible to find a suitable integer $\mu\geq0$ such that
\begin{equation} \label{pd_eq:pp_symb}
\psi(\eta,\xi)\,\equiv\,\psi_\mu(\eta,\xi)\,:=\,\chi_{\mu}(\eta)\,\chi_{\mu+2}(\xi)\,+\,\sum_{k=\mu+3}^{+\infty}\chi_{k-3}(\eta)\,\vphi_k(\xi)
\end{equation}
is a function with the just described properties.

\smallskip
Define now
$$
G^\psi(x,\xi)\,:=\,\left(\mc{F}^{-1}_\eta\psi\right)(x,\xi)\,,
$$
the inverse Fourier transform of $\psi$ with respect to the variable $\eta$.
\begin{lemma} \label{l:G}
 For all $(\beta,\alpha)\in\N^N\times\N^N$, there exist constants $C_{\beta,\alpha}$, independent of $\g$, such that:
\begin{eqnarray}
 \left\|\d^\beta_x\d^\alpha_\xi G^\psi(\cdot,\xi)\right\|_{L^1(\R^N_x)} & \leq &
C_{\beta,\alpha}\left(\gamma+|\xi|\right)^{-|\alpha|+|\beta|}\,, \label{pd_est:G_1} \\
\left\||\cdot|\,\log\left(2+\frac{1}{|\cdot|}\right)\,\d^\beta_x\d^\alpha_\xi G^\psi(\cdot,\xi)\right\|_{L^1(\R^N_x)} & \leq &
C_{\beta,\alpha}\left(\gamma+|\xi|\right)^{-|\alpha|+|\beta|-1}\,\log(1+\gamma+|\xi|)\,. \label{pd_est:G_2}
\end{eqnarray}
\end{lemma}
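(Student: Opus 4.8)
The plan is to argue directly from the definition $G^\psi(x,\xi)=(\mc{F}^{-1}_\eta\psi)(x,\xi)=c_N\int_{\R^N}e^{ix\cdot\eta}\,\psi(\eta,\xi)\,d\eta$, for a dimensional constant $c_N$. Differentiating under the integral sign gives, for all multi-indices $\beta,\alpha$,
\[
\d^\beta_x\d^\alpha_\xi G^\psi(x,\xi)\,=\,c_N\int_{\R^N}e^{ix\cdot\eta}\,(i\eta)^\beta\,\d^\alpha_\xi\psi(\eta,\xi)\,d\eta\,.
\]
Set $\lambda:=\g+|\xi|$. By the hypotheses on $\psi$ one has, with constants independent of $\g$ (see Remark \ref{r:indep_gamma}), the bounds $|\d^\nu_\eta\d^\alpha_\xi\psi(\eta,\xi)|\leq C_{\nu,\alpha}\,\lambda^{-|\alpha|-|\nu|}$; moreover, since $\psi(\cdot,\xi)$ is locally constant (equal to $0$, resp.\ to $1$) outside the shell $\veps_1\lambda\leq|\eta|\leq\veps_2\lambda$, the function $\d^\alpha_\xi\psi(\cdot,\xi)$ is supported in $\{|\eta|\leq\veps_2\lambda\}$ -- a set of measure $O(\lambda^N)$ -- and even in the annulus $\{\veps_1\lambda\leq|\eta|\leq\veps_2\lambda\}$ when $|\alpha|\geq1$.

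The core of the argument is the family of pointwise bounds
\[
|x|^{2M}\,\bigl|\d^\beta_x\d^\alpha_\xi G^\psi(x,\xi)\bigr|\;\leq\;C_{M,\beta,\alpha}\,\lambda^{\,N+|\beta|-|\alpha|-2M}\,,\qquad\text{for every integer }M\geq0\,,
\]
which I would obtain by integrating by parts in $\eta$: since $(-\Delta_\eta)^M e^{ix\cdot\eta}=|x|^{2M}e^{ix\cdot\eta}$ and $\psi$ has compact $\eta$-support, $|x|^{2M}\d^\beta_x\d^\alpha_\xi G^\psi(x,\xi)=c_N\int e^{ix\cdot\eta}(-\Delta_\eta)^M[(i\eta)^\beta\d^\alpha_\xi\psi]\,d\eta$. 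Expanding $(-\Delta_\eta)^M[\eta^\beta\d^\alpha_\xi\psi]$ by the Leibniz rule, each summand is a product of a derivative $\d^\mu_\eta(\eta^\beta)$ -- of size $O(\lambda^{|\beta|-|\mu|})$, and vanishing when $|\mu|>|\beta|$ -- and a derivative $\d^\nu_\eta\d^\alpha_\xi\psi$ of size $O(\lambda^{-|\alpha|-|\nu|})$, with $|\mu|+|\nu|=2M$; hence the integrand is $O(\lambda^{|\beta|-|\alpha|-2M})$ on a set of measure $O(\lambda^N)$, which yields the claim (the case $M=0$ being the trivial estimate obtained by putting absolute values inside).

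To get \eqref{pd_est:G_1}, I would split $\R^N_x$ into $\{|x|\leq1/\lambda\}$ and $\{|x|>1/\lambda\}$: on the first set use the $M=0$ bound and the measure $O(\lambda^{-N})$; on the second use the bound with $M$ chosen so that $2M>N$, together with $\int_{|x|>1/\lambda}|x|^{-2M}\,dx=O(\lambda^{2M-N})$. Both pieces are $O(\lambda^{|\beta|-|\alpha|})$, giving \eqref{pd_est:G_1}. For \eqref{pd_est:G_2} one inserts the weight $|x|\log(2+|x|^{-1})$ and repeats the splitting: on $\{|x|\leq1/\lambda\}$ combine the $M=0$ bound with the elementary estimate $\int_{|x|\leq1/\lambda}|x|\log(2+|x|^{-1})\,dx=O(\lambda^{-N-1}\log(2+\lambda))$ (change of variables $x=y/\lambda$ and $\log(2+\lambda/|y|)\leq\log(2+\lambda)+\log(1+|y|^{-1})$); on $\{|x|>1/\lambda\}$ bound $\log(2+|x|^{-1})\leq\log(2+\lambda)$, take $M$ with $2M>N+1$, and use $\int_{|x|>1/\lambda}|x|^{1-2M}\,dx=O(\lambda^{2M-N-1})$. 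Both pieces come out $O(\lambda^{|\beta|-|\alpha|-1}\log(2+\lambda))$, and since $\g\geq1$ forces $\log(1+\lambda)\geq\log2>0$ one may replace $\log(2+\lambda)$ by $C\log(1+\lambda)=C\log(1+\g+|\xi|)$, which is \eqref{pd_est:G_2}. I do not expect a genuine obstacle: the only points requiring some care are the bookkeeping of the $\eta$-support of $\d^\alpha_\xi\psi$ (which keeps the $\eta$-volume at $O(\lambda^N)$ and makes the derivative count in the Leibniz expansion close correctly) and the insistence that all constants be $\g$-independent, which is guaranteed by the hypotheses on $\psi$; the logarithmic weight in \eqref{pd_est:G_2} is absorbed exactly by the elementary integral estimate above.
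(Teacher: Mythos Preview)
Your argument is correct and is the standard direct proof of such estimates. The paper itself does not supply a proof but simply refers to \cite[Lemma~5.1.7]{M-2008}, whose argument is essentially the one you have written out.
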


\begin{proof}
See \cite[Lemma 5.1.7]{M-2008}. 
%For the second one, it's enough to note that, by properties of $\psi$, $\d^\beta_x$ gives a $|\beta|$ to the exponent in the right-hand side of the inequality, while multiplication by $x$ goes like $|\xi|^{-1}$. 
\end{proof}

Thanks to $G^\psi$, we can smooth out a symbol $a$ in the $x$ variable and then define the paradifferential operator associated to $a$
as the pseudodifferential operator related to this smooth function. We set the classical symbol associated to $a$ to be
$$
\sigma_a(x,\xi)\,:=\,\left(\,\psi(D_x,\xi)\,a\,\right)(x,\xi)\,=\,\left(G^\psi(\cdot,\xi)\,*_x\,a(\cdot,\xi)\right)(x)\,,
$$
and then the paradifferential operator associated to $a$:
$$T_a\,:=\,\sigma_a(x,D_x)\,,$$
where we have omitted $\psi$ because the definition is independent of it, up to lower order terms.
\begin{rem} \label{r:p-prod}
 Let us note that if $a=a(x)\in L^\infty$ and if we take the cut-off function $\psi_{-3}$, then $T_a$ is actually the usual
paraproduct operator. If we take $\psi_\mu$ as defined in \eqref{pd_eq:pp_symb}, instead, we get a paraproduct operator
which starts from high enough frequencies, which will be indicated with $T^\mu_a$(see \cite [Par. 3.3]{C-M}).
\end{rem}

Let us point out that we can also define a $\gamma$-dyadic decomposition. First of all, we set
$$
\Lambda(\xi,\g)\,:=\,\left(\g^2\,+\,|\xi|^2\right)^{1/2}\,.
$$
Then, taken the usual smooth function $\chi$ associated to a  Littlewood-Paley decomposition, we can define
$$
\chi_\nu(\xi,\g)\,:=\,\chi\left(2^{-\nu}\Lambda(\xi,\g)\right)\,,\quad
S^\g_\nu\,:=\,\chi_\nu(D_x,\g)\,,\quad
\Delta^\g_\nu\,:=\,S^\g_{\nu+1}-S^\g_\nu\,.
$$
The usual properties of the support of the localization functions still hold, and
for all fixed $\g\geq1$ and all tempered distributions $u$, we have
$$
u\,=\,\sum_{\nu=0}^{+\infty}\,\Delta^\g_\nu\,u\qquad\mbox{in }\;\;\mc{S}'\,.
$$
Moreover, with natural modifications in definitions, we can introduce the space $H^{s+\alpha\log}_\gamma$ as the set of
tempered distributions for which
$$
\left\|u\right\|^2_{H^{s+\alpha\log}_\g}\,:=\,\int_{\R^N_\xi}\Lambda^{2s}(\xi,\g)\,\log^{2\alpha}(1+\g+|\xi|)\,
\left|\what{u}(\xi)\right|^2\,d\xi\;\;<\;+\infty\,.
$$
For the details see \cite[Appendix B]{M-Z}. What is important to retain is that,
once we fix $\g\geq1$
(for example, to obtain positivity of paradifferential operators involved in our computations), the whole previous construction is
equivalent to the classical one; in particular,
the space $H^{s+\alpha\log}_\gamma$ coincides with $H^{s+\alpha\log}$, the respective norms are equivalent and the
characterization given by Proposition \ref{p:log-H} still holds true.

\subsection{On log-Lipschitz and log-Zygmund functions}

Let us now give the rigorous definitions of the moduli of continuity we are dealing with, and state some of
their properties.
\begin{defin} \label{d:LL}
 A function $f\in L^\infty(\R^N)$ is said to be log-Lipschitz, and we write $f\in LL(\R^N)$, if the quantity
%there exists a constant
%$\mc L>0$ such that, for all $y\in\R^N$,
$$
|f|_{LL}\,:=\,\sup_{x,y\in\R^N,\,|y|<1}
\left(\frac{\left|f(x+y)\,-\,f(x)\right|}{|y|\,\log\left(1\,+\,\frac{1}{|y|}\right)}\right)\,<\,+\infty\,.
$$
We define $\|f\|_{LL}\,:=\,\|f\|_{L^\infty}\,+\,|f|_{LL}$.
\end{defin}
Let us define also the space of log-Zygmund functions. We will give the general definition in $\R^N$, even if the
one dimensional case will be the only relevant one for our purposes.
\begin{defin} \label{d:LZ}
 A function $g\in L^\infty(\R^N)$ is said to be log-Zygmund, and we write $g\in LZ(\R^N)$, if the quantity
%there exists a constant
%$\mc{Z}>0$ such that, for all $y\in\R^N$,
$$
|g|_{LZ}\,:=\,\sup_{x,y\in\R^N,\,|y|<1}
\left(\frac{\left|g(x+y)\,+\,g(x-y)\,-\,2\,g(x)\right|}{|y|\,\log\left(1\,+\,\frac{1}{|y|}\right)}\right)\,<\,+\infty\,.
$$
We define $\|g\|_{LZ}\,:=\,\|g\|_{L^\infty}\,+\,|g|_{LZ}$.
\end{defin}

\begin{rem} \label{r:log-g}
Let us immediately point out that, by monotonicity of logarithmic function, we can replace the factor
$\log\left(1+1/|y|\right)$ in previous definitions with $\log\left(1+\g+1/|y|\right)$, for all parameters $\g\geq1$.
As paradifferential calculus with parameters will play a fundamental role in our computations, it's convenient to perform
such a change, and to do so also in hypothesis \eqref{h:LZ_t} and \eqref{h:LL_x} of section \ref{s:results}.
\end{rem}

Let us give a characterization of the space $LZ$. Recall that the space of Zygmund functions is actually
$B^1_{\infty,\infty}$ (see e.g. \cite{Ch1995}): arguing in the same way, one can prove the next proposition.
\begin{prop} \label{p:LZ}
 The space $LZ(\R^N)$ coincides with the logarithmic Besov space $B^{1-\log}_{\infty,\infty}$, i.e. the space of tempered
distributions $u$ such that
\begin{equation} \label{est:LZ}
 \sup_{\nu\geq0}\biggl(2^{\nu}\,(\nu+1)^{-1}\,\left\|\Delta_\nu u\right\|_{L^\infty}\biggr)\,<\,+\infty\,.
\end{equation}
\end{prop}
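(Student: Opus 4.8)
The plan is to prove Proposition \ref{p:LZ} by mimicking the classical proof that the Zygmund class $\mathcal{C}^1_*$ equals $B^1_{\infty,\infty}$, keeping track of the extra logarithmic factor. I would establish the two inclusions separately.

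\textbf{From the Besov condition to $LZ$.} Suppose $u$ satisfies \eqref{est:LZ}, i.e. $\|\Delta_\nu u\|_{L^\infty}\leq C\,2^{-\nu}(\nu+1)$ for all $\nu\geq0$; in particular $u\in L^\infty$. Given $x$ and $y$ with $|y|<1$, pick the integer $M\geq0$ with $2^{-M-1}\leq|y|<2^{-M}$ and split the second difference $g(x+y)+g(x-y)-2g(x)$ as $\sum_{\nu\leq M}+\sum_{\nu>M}$ applied to the pieces $\Delta_\nu u$. For the low-frequency part, on each $\Delta_\nu u$ (whose spectrum is localized at frequency $\sim 2^\nu$) I use the second-order Taylor bound $|\Delta_\nu u(x+y)+\Delta_\nu u(x-y)-2\Delta_\nu u(x)|\leq |y|^2\,\|\d^2\Delta_\nu u\|_{L^\infty}\leq C\,|y|^2\,2^{2\nu}\,\|\Delta_\nu u\|_{L^\infty}$, where the last step is Bernstein's inequality (Lemma \ref{l:bern}); summing $|y|^2\,2^{2\nu}\cdot 2^{-\nu}(\nu+1) = |y|^2\,2^{\nu}(\nu+1)$ over $\nu\leq M$ gives, since the series is dominated by its last term up to a constant, a bound $C\,|y|^2\,2^M(M+1)\leq C\,|y|\,(M+1)$. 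For the high-frequency part, I simply bound each term by $4\|\Delta_\nu u\|_{L^\infty}\leq C\,2^{-\nu}(\nu+1)$ and sum over $\nu>M$, which again is controlled by $C\,2^{-M}(M+1)\leq C\,|y|\,(M+1)$. Finally $M+1\leq C\,\log(1+1/|y|)$ because $2^{-M}\leq 2|y|$, so both pieces are $\leq C\,|y|\,\log(1+1/|y|)$, which is exactly $|g|_{LZ}<+\infty$.

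\textbf{From $LZ$ to the Besov condition.} Conversely, let $g\in LZ(\R^N)$. I want $2^\nu(\nu+1)^{-1}\|\Delta_\nu g\|_{L^\infty}$ bounded. Using that $\varphi$ has vanishing moments up to order one — more precisely, writing $\Delta_\nu g = \varphi(2^{-\nu}D)g$ and choosing $h=\mathcal{F}^{-1}\varphi$, one has $\int h = 0$ and $\int y\,h(y)\,dy = 0$, so that $\Delta_\nu g(x) = 2^{\nu N}\int h(2^\nu(x-y))g(y)\,dy$ can be rewritten, by the symmetry $\int h = \int y\,h\,dy = 0$ (after symmetrizing $h$, or by pairing $y$ with $-y$), as $\tfrac12\,2^{\nu N}\int h(2^\nu z)\,\bigl(g(x-z)+g(x+z)-2g(x)\bigr)\,dz$. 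Then $|\Delta_\nu g(x)|\leq C\,2^{\nu N}\int |h(2^\nu z)|\,|z|\,\log(1+1/|z|)\,dz\cdot|g|_{LZ}$; the change of variables $w=2^\nu z$ produces a factor $2^{-\nu}$ from $|z|$ and a factor $\log(1+2^\nu/|w|)\leq C(\nu+1)\log(1+1/|w|)$ from the logarithm (for $\nu\geq1$), and $\int |h(w)|\,|w|\,\log(1+1/|w|)\,dw<+\infty$ since $h$ is Schwartz. This yields $\|\Delta_\nu g\|_{L^\infty}\leq C\,2^{-\nu}(\nu+1)\,|g|_{LZ}$, i.e. \eqref{est:LZ}. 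The case $\nu=0$ is immediate from $\|\Delta_0 g\|_{L^\infty}\leq C\|g\|_{L^\infty}$.

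\textbf{Main obstacle.} The delicate point is the ``vanishing moment'' trick in the second inclusion: one must genuinely exploit that $\varphi$ (equivalently $\chi(\xi)-\chi(2\xi)$) kills constants \emph{and} linear functions so that $\Delta_\nu g$ can be expressed through the \emph{second} difference of $g$ rather than through $g$ or its first difference — only the second difference is controlled by the $LZ$ seminorm. The symmetrization that turns $\int h(2^\nu z)g(x-z)\,dz$ into the symmetric combination requires a little care (replacing $h$ by its even part causes no harm since we may as well assume $\varphi$ radial, as fixed in subsection \ref{ss:L-P}); once that is set up, the remaining estimates are routine Schwartz-tail bounds. The bookkeeping of the extra $(\nu+1)$ factor — i.e. that $\log(1+2^\nu/|w|)\lesssim (\nu+1)\log(2+1/|w|)$ uniformly — is the other place one must be slightly careful, but it is elementary.
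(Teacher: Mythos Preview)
Your proof is correct and follows essentially the same route as the paper's: the low/high frequency split with second-order Taylor plus Bernstein for the direction $B^{1-\log}_{\infty,\infty}\Rightarrow LZ$, and the even-kernel symmetrization (using that $\vphi$ is radial, hence $h=\mc{F}^{-1}\vphi$ is even with $\int h=0$) to rewrite $\Delta_\nu g$ as an average of second differences for the converse. One small slip to fix: the inequality $\log(1+2^\nu/|w|)\leq C(\nu+1)\log(1+1/|w|)$ you invoke in the body fails for large $|w|$ (the ratio grows like $2^\nu$ as $|w|\to\infty$); the corrected version $\log(1+2^\nu/|w|)\lesssim(\nu+1)\log(2+1/|w|)$ that you state in your ``Main obstacle'' paragraph is the right one, and then $\int|h(w)|\,|w|\,\log(2+1/|w|)\,dw<+\infty$ since $h\in\mc{S}$. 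The paper compresses this last estimate into the remark that $\sigma\mapsto\sigma\log(1+\g+1/\sigma)$ is increasing, but the content is the same.
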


\begin{proof}
 \begin{itemize}
  \item[(i)] Let us first consider a $u\in B^{1-\log}_{\infty,\infty}$ and take $x$ and $y\,\in\R^N$, with
$|y|<1$. For all fixed $n\in\N$ we can write:
\begin{eqnarray*}
u(x+y)+u(x-y)-2u(x) & = & \sum_{k<n}\left(\Delta_ku(x+y)+\Delta_ku(x-y)-2\Delta_ku(x)\right) \\
& & \qquad\qquad+\,\sum_{k\geq n}\left(\Delta_ku(x+y)+\Delta_ku(x-y)-2\Delta_ku(x)\right)\,.
\end{eqnarray*}
First, we take advantage of the Taylor's formula up to second order to handle the former terms; then, we use property
\eqref{est:LZ}. Hence we get
\begin{eqnarray*}
 \left|u(x+y)+u(x-y)-2u(x)\right| & \leq & C\,|y|^2\sum_{k<n}\left\|\nabla^2\Delta_ku\right\|_{L^\infty}\,+\,
4\,\sum_{k\geq n}\left\|\Delta_ku\right\|_{L^\infty} \\
& \leq & C\left(|y|^2\sum_{k<n}2^k\,(k+1)\,+\,\sum_{k\geq n}2^{-k}(k+1)\right) \\
& \leq & C\,(n+1)\left(|y|^2\,2^n\,+\,2^{-n}\right)\,.
\end{eqnarray*}
Now, as $|y|<1$, the choice $n=1+\left[\log_2\left(1/|y|\right)\right]$ (where with $[\sigma]$ we mean the greatest positive
integer less than or equal to $\sigma$) completes the proof of the first part.
\item[(ii)] Now, given a log-Zygmund function $u$, we want to estimate the $L^\infty$ norm of its localized part $\Delta_ku$.

Let us recall that applying the operator $\Delta_k$ is the same of the convolution with the inverse Fourier transform of
the function $\vphi(2^{-k}\cdot)$, which we call $h_k(x)=2^{kN}h(2^k\cdot)$, where we set
$h=\mc{F}^{-1}_\xi(\vphi)$. As $\vphi$ is an even function, so does $h$; moreover we have
$$
\int h(z)\,dz\,=\,\int\mc{F}^{-1}_\xi(\vphi)(z)\,dz\,=\,\vphi(\xi)_{|\xi=0}\,=\,0\,.
$$
Therefore, we can write:
$$
\Delta_ku(x)\,=\,2^{kN-1}\int h(2^ky)\left(u(x+y)+u(x-y)-2u(x)\right)\,dy\,,
$$
and noting that $\sigma\,\mapsto\,\sigma\,\log\left(1+\g+1/\sigma\right)$ is increasing completes the proof of the second part.
 \end{itemize}
\end{proof}

From definitions \ref{d:LL} and \ref{d:LZ}, it's obvious that $LL(\R^N)\,\hra\,LZ(\R^N)$: Proposition 3.3 of \cite{C-L}
explains this property in terms of dyadic decomposition.
\begin{prop} \label{p:dyadic-LL}
 There exists a constant $C$ such that, for all $a\in LL(\R^N)$ and all integers $k\geq0$, we have
\begin{equation}
 \left\|\Delta_k a\right\|_{L^\infty}\,\leq\,C\,(k+1)\,2^{-k}\,\|a\|_{LL}\,. \label{est:a_k}
\end{equation}
 Moreover, for all $k\in\N$ we have
\begin{eqnarray}
 \left\|a\,-\,S_k a\right\|_{L^\infty} & \leq & C\,(k+1)\,2^{-k}\,\|a\|_{LL} \label{est:a-S_ka} \\
\left\|S_k a\right\|_{\mc{C}^{0,1}} & \leq & C\,(k+1)\,\|a\|_{LL}\,. \label{est:S_ka}
\end{eqnarray}
\end{prop}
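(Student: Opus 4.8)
The three estimates are arranged in a natural logical order: \eqref{est:a_k} is the core statement, \eqref{est:a-S_ka} follows from it by summation, and \eqref{est:S_ka} is obtained by a variant of the argument proving \eqref{est:a_k}. The whole proof mimics part (ii) of the proof of Proposition \ref{p:LZ}, i.e.\ it rests on the mean-zero property of the Littlewood--Paley kernels combined with the very definition of $\|a\|_{LL}$.

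For \eqref{est:a_k}, the case $k=0$ is trivial, since $\Delta_0=\chi(D)$ is bounded on $L^\infty$. For $k\geq1$ I would write $\Delta_k a=h_k*a$ with $h_k(x)=2^{kN}h(2^kx)$ and $h=\mc{F}^{-1}_\xi\vphi$; as $\vphi$ vanishes near the origin, $\int h(z)\,dz=\vphi(0)=0$, so after the rescaling $z=2^ky$ one gets $\Delta_k a(x)=\int h(z)\bigl(a(x-2^{-k}z)-a(x)\bigr)\,dz$. Then I would split the integral into $\{|z|<2^k\}$ and $\{|z|\geq2^k\}$. On the far region one bounds $|a(x-2^{-k}z)-a(x)|\leq2\|a\|_{L^\infty}$ and uses the Schwartz decay of $h$, which yields a contribution $O(2^{-kM}\|a\|_{L^\infty})$ for any $M$. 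On the near region, where $|2^{-k}z|<1$, the log-Lipschitz bound together with the elementary inequalities $\log(1+ab)\leq\log(1+a)+\log(1+b)$ and $t\log(1+1/t)\leq1$ gives $|a(x-2^{-k}z)-a(x)|\leq C\,|a|_{LL}\,2^{-k}\bigl((k+1)|z|+1\bigr)$, and integrating against $|h(z)|$, which has finite first moment, produces the bound $C(k+1)2^{-k}\|a\|_{LL}$. Estimate \eqref{est:a-S_ka} then follows by writing $a-S_ka=\sum_{j\geq k+1}\Delta_j a$ in $L^\infty$ (the series converging normally by \eqref{est:a_k}), summing \eqref{est:a_k}, and using the elementary bound $\sum_{j\geq m}(j+1)2^{-j}\leq C(m+1)2^{-m}$.

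For \eqref{est:S_ka}, the bound $\|S_ka\|_{L^\infty}\leq C\|a\|_{L^\infty}$ is immediate, so the point is the Lipschitz seminorm of $S_k a$, which (being smooth) equals $\|\nabla S_k a\|_{L^\infty}$. Here I would apply the same idea directly to $\nabla S_k a$ rather than summing over dyadic blocks: writing $S_ka=g_k*a$ with $g_k(x)=2^{kN}g(2^kx)$, $g=\mc{F}^{-1}_\xi\chi$, one has $\nabla S_ka=(\nabla g_k)*a$ with $\int\nabla g_k(y)\,dy=0$, hence $\nabla S_ka(x)=2^{k}\int(\nabla g)(z)\bigl(a(x-2^{-k}z)-a(x)\bigr)\,dz$; the same splitting and the same pointwise bounds on $a(x-2^{-k}z)-a(x)$ show that the prefactor $2^k$ is absorbed by the $2^{-k}$ gain of the log-Lipschitz estimate on the near region (leaving $C(k+1)\|a\|_{LL}$) and by the rapid decay of $\nabla g$ on the far region, which gives $\|\nabla S_ka\|_{L^\infty}\leq C(k+1)\|a\|_{LL}$.

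I expect the only genuine subtlety to be precisely this last step: a naive summation of the Bernstein bounds $\|\nabla\Delta_j a\|_{L^\infty}\leq C\,2^j\|\Delta_j a\|_{L^\infty}\leq C(j+1)\|a\|_{LL}$ over $0\leq j\leq k$ would cost an extra factor and produce $(k+1)^2$ in place of the sharp $(k+1)$, so one really has to exploit the cancellation $\int\nabla g_k=0$ directly. Everything else is a routine interplay between the definition of $\|a\|_{LL}$, the mean-zero property of the kernels $h_k$ and $\nabla g_k$, and the Schwartz decay of $h$ and $\nabla g$.
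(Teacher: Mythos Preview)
Your argument is correct. The paper does not actually supply a proof of this proposition: it is stated as Proposition 3.3 of \cite{C-L} and quoted without demonstration. Your proof is the standard one and matches what one finds in that reference; in particular, your observation that the sharp bound \eqref{est:S_ka} requires exploiting the cancellation $\int\nabla g_k=0$ directly (rather than summing Bernstein estimates over the blocks, which would lose a factor $(k+1)$) is exactly the point.
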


\begin{rem} \label{r:LL}
 Note that, again from Proposition 3.3 of \cite{C-L}, property \eqref{est:S_ka} is a characterization of the space $LL$.
\end{rem}

Using dyadic characterization of the space $LZ$ and arguing as for proving Proposition \ref{p:LZ}, we can establish
the following property. For our purposes, it's enough to consider a log-Zygmund function $a$ depending only on the
time variable $t$, but the same reasoning holds true also in higher dimensions.
\begin{lemma} \label{l:LZ}
 For all $a\in LZ(\R)$, there exists a constant $C$, depending only on the $LZ$ norm of $a$,
such that, for all $\gamma\geq1$ and all $0<|\tau|<1$ one has
\begin{equation} \label{est:lip-LZ}
 \sup_{t\in\R}\left|a(t+\tau)-a(t)\right|\,\leq\,C\,|\tau|\,\log^2\!\left(1+\gamma+\frac{1}{|\tau|}\right).
\end{equation}
\end{lemma}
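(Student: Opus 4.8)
The plan is to mimic part (i) of the proof of Proposition~\ref{p:LZ}, replacing the second difference by a first difference. The starting point is the dyadic characterization of $LZ$: since $a\in LZ(\R)=B^{1-\log}_{\infty,\infty}$, Proposition~\ref{p:LZ} provides a constant $C$, controlled by $\|a\|_{LZ}$, such that $\|\Delta_k a\|_{L^\infty}\leq C\,(k+1)\,2^{-k}$ for every $k\geq0$. Combining this with the Bernstein inequality of Lemma~\ref{l:bern} applied to the spectrally localized function $\Delta_k a$ gives moreover $\|\Delta_k a'\|_{L^\infty}\leq C\,2^k\,\|\Delta_k a\|_{L^\infty}\leq C\,(k+1)$.

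Next, fix $0<|\tau|<1$ and an integer $n\in\N$ to be chosen later, and decompose
$$
a(t+\tau)-a(t)\,=\,\sum_{k<n}\bigl(\Delta_k a(t+\tau)-\Delta_k a(t)\bigr)\,+\,\sum_{k\geq n}\bigl(\Delta_k a(t+\tau)-\Delta_k a(t)\bigr)\,.
$$
For the low-frequency part, estimate each term by the mean value theorem, $|\Delta_k a(t+\tau)-\Delta_k a(t)|\leq|\tau|\,\|\Delta_k a'\|_{L^\infty}\leq C\,|\tau|\,(k+1)$, so that this contribution is $\leq C\,|\tau|\sum_{k<n}(k+1)\leq C\,|\tau|\,(n+1)^2$. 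For the high-frequency part, use only the triangle inequality together with the bound on $\|\Delta_k a\|_{L^\infty}$, getting $\sum_{k\geq n}2\,\|\Delta_k a\|_{L^\infty}\leq C\sum_{k\geq n}(k+1)\,2^{-k}\leq C\,(n+1)\,2^{-n}$. Altogether,
$$
\sup_{t\in\R}\bigl|a(t+\tau)-a(t)\bigr|\,\leq\,C\,\bigl(|\tau|\,(n+1)^2\,+\,(n+1)\,2^{-n}\bigr)\,.
$$

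Finally, optimize in $n$ by taking $n=1+[\log_2(1/|\tau|)]$, exactly as at the end of the proof of Proposition~\ref{p:LZ}: with this choice $2^{-n}\leq|\tau|$ and $n+1\leq C\,\log(1+1/|\tau|)$, so both contributions are bounded by $C\,|\tau|\,\log^2(1+1/|\tau|)$. Since $\gamma\geq1$ forces $\log(1+1/|\tau|)\leq\log(1+\gamma+1/|\tau|)$ (cf. Remark~\ref{r:log-g}), this yields \eqref{est:lip-LZ}. The computation is routine; the only point needing a little care is the choice of the splitting index $n$, which must balance the linear growth of the low-frequency sum against the exponential decay of the high-frequency tail so as to produce precisely the $|\tau|\log^2$ behaviour --- one power of the logarithm coming from the number of low-frequency blocks and one from the size of each of them.
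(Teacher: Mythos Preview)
Your proof is correct and follows essentially the same route as the paper: the same dyadic splitting at level $n$, mean value theorem on the low frequencies (using Bernstein to bound $\|\Delta_k a'\|_{L^\infty}$ by $C(k+1)$), the $LZ$ characterization on the high frequencies, and the identical choice $n=1+[\log_2(1/|\tau|)]$. The only cosmetic difference is that you spell out the Bernstein step and the passage from $\log(1+1/|\tau|)$ to $\log(1+\gamma+1/|\tau|)$ explicitly, whereas the paper leaves these implicit.
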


\begin{proof}
As done in proving Proposition \ref{p:LZ}, for all $n\in\N$ we can write
$$
a(t+\tau)-a(t)\,=\,\sum_{k<n}\left(\Delta_ka(t+\tau)-\Delta_ka(t)\right)\,+\,
\sum_{k\geq n}\left(\Delta_ka(t+\tau)-\Delta_ka(t)\right)\,,
$$
where, obviously, the localization in frequencies is done with respect to the time variable. For the former terms we
use the mean value theorem, while for the latter ones we use characterization \eqref{est:a_k}; hence, we get
\begin{eqnarray*}
\left|a(t+\tau)-a(t)\right| & \leq & \sum_{k<n}\left\|\frac{d}{dt}\Delta_ka\right\|_{L^\infty}|\tau|\,+\,
2\sum_{k\geq n}\left\|\Delta_ka\right\|_{L^\infty} \\
& \leq & C\left(n^2\,|\tau|\,+\,\sum_{k\geq n}2^{-k}k\right).
\end{eqnarray*}
The series in the right-hand side of the previous inequality can be bounded, up to a multiplicative constant, by
$2^{-n}n$; therefore
$$
\left|a(t+\tau)-a(t)\right|\,\leq\,C\,n\left(n\,|\tau|\,+\,2^{-n}\right)\,,
$$
and the choice $n=1+[\log_2(1/|\tau|)]$ completes the proof.
\end{proof}

Now, given a log-Zygmund function $a(t)$, we can regularize it by convolution. So, take an
even function $\rho\in\mc{C}^\infty_0(\mbb{R}_t)$, $0\leq\rho\leq1$, whose support is contained in the interval $[-1,1]$ and
such that $\int\rho(t)dt=1$, and define the mollifier kernel
$$
\rho_\veps(t)\,:=\,\frac{1}{\veps}\,\,\rho\!\left(\frac{t}{\veps}\right)\qquad\qquad\forall\,\veps\in\,]0,1]\,.
$$
We smooth out the function $a$ setting, for all $\veps\in\,]0,1]$,
\begin{equation} \label{eq:a^e}
a_\veps(t)\,:=\,\left(\rho_\veps\,*\,a\right)(t)\,=\,\int_{\mbb{R}_s}\rho_{\veps}(t-s)\,a(s)\,ds\,.
\end{equation}
The following proposition holds true.
\begin{prop} \label{p:LZ-reg}
 Let $a$ be a log-Zygmund function. There exist constants $C$ such that, for all $\g\geq1$, one has
\begin{eqnarray}
\left|a_\veps(t)-a(t)\right| & \leq & C\,\|a\|_{LZ}\,\,\veps\,\log\left(1+\gamma+\frac{1}{\veps}\right) \label{est:a^e-a} \\
\left|\d_ta_\veps(t)\right| & \leq & C\,\|a\|_{LZ}\,\log^2\!\left(1+\gamma+\frac{1}{\veps}\right) \label{est:d_t-a^e} \\
\left|\d^2_ta_\veps(t)\right| & \leq & C\,\|a\|_{LZ}\,\,\frac{1}{\veps}\,
\log\!\left(1+\gamma+\frac{1}{\veps}\right) \label{est:d_tt-a^e}\,.
\end{eqnarray}
\end{prop}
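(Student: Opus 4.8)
The plan is to rewrite each of the three quantities as an integral of $a$ against a suitable derivative of the mollifier $\rho_\veps$, and then to exploit systematically the evenness of $\rho$ so that the \emph{second difference} $a(t+\sigma)+a(t-\sigma)-2a(t)$ appears: this is precisely the quantity controlled by the log-Zygmund hypothesis on $a$ (with the parameter $\g$ inserted, see Remark \ref{r:log-g}). The reason this symmetrisation is needed is that the brutal estimates — namely bounding $|a_\veps-a|$ by the modulus of continuity of $a$ at scale $\veps$, and $|\d^2_ta_\veps|\le\|\rho''_\veps\|_{L^1}\,\|a\|_{L^\infty}$, which is only of order $\veps^{-2}$ — are strictly weaker than what is claimed; passing to the second difference gains exactly one power of $\veps$.

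For \eqref{est:a^e-a}: since $\int\rho_\veps=1$ and $\rho_\veps$ is even,
$$
a_\veps(t)-a(t)\,=\,\int\rho_\veps(\sigma)\bigl(a(t-\sigma)-a(t)\bigr)\,d\sigma\,=\,\frac12\int\rho_\veps(\sigma)\bigl(a(t+\sigma)+a(t-\sigma)-2a(t)\bigr)\,d\sigma\,;
$$
on $\Supp\rho_\veps\subseteq[-\veps,\veps]$ one has $|\sigma|\le\veps\le1$, so the log-Zygmund bound together with the monotonicity of $\sigma\mapsto\sigma\log(1+\g+1/\sigma)$ gives $|a(t+\sigma)+a(t-\sigma)-2a(t)|\le\|a\|_{LZ}\,\veps\log(1+\g+1/\veps)$, and $\int|\rho_\veps|=1$ concludes. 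For \eqref{est:d_tt-a^e} the argument is identical: $\d^2_ta_\veps=\rho''_\veps*a$, the function $\rho''_\veps$ is again even and satisfies $\int\rho''_\veps=0$, whence
$$
\d^2_ta_\veps(t)\,=\,\frac12\int\rho''_\veps(\sigma)\bigl(a(t+\sigma)+a(t-\sigma)-2a(t)\bigr)\,d\sigma\,,
$$
and bounding the second difference by $\|a\|_{LZ}\,\veps\log(1+\g+1/\veps)$ on the support, together with $\|\rho''_\veps\|_{L^1}=C\,\veps^{-2}$, yields $|\d^2_ta_\veps(t)|\le C\,\|a\|_{LZ}\,\veps^{-1}\log(1+\g+1/\veps)$. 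Finally, for \eqref{est:d_t-a^e} one writes $\d_ta_\veps=\rho'_\veps*a$ with $\int\rho'_\veps=0$, so $\d_ta_\veps(t)=\int\rho'_\veps(t-s)\bigl(a(s)-a(t)\bigr)\,ds$; here no symmetrisation is needed — it suffices to invoke Lemma \ref{l:LZ}, which bounds $|a(s)-a(t)|$ for $|s-t|\le\veps\le1$ by $C\,\|a\|_{LZ}\,\veps\log^2(1+\g+1/\veps)$, and then use $\|\rho'_\veps\|_{L^1}=C\,\veps^{-1}$.

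The only genuinely delicate point is \eqref{est:d_tt-a^e}: one must not discard the symmetrisation of $\rho''_\veps$, since otherwise the bound degrades to $\veps^{-2}$, which would be fatal for the construction of the approximate energy later on. In other words, \eqref{est:d_tt-a^e} is exactly where the Zygmund (second-difference) regularity in time is used — as opposed to mere boundedness, or even log-Lipschitz regularity, which would only control the \emph{first} difference — and it is what makes a Tarama-type energy available in the proof of Theorem \ref{t:en-est}. The remaining computations are routine.
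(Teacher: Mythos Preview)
Your proof is correct and follows essentially the same route as the paper's. The paper defers \eqref{est:a^e-a} and \eqref{est:d_tt-a^e} to \cite{C-DS}, but the argument there is exactly your symmetrisation via the evenness of $\rho$ (resp.\ $\rho''$) to produce the second difference; for \eqref{est:d_t-a^e} both you and the paper use $\int\rho'=0$ to reduce to a first difference and then invoke Lemma~\ref{l:LZ}, the only cosmetic difference being that the paper keeps the factor $|s|\log^2(1+\gamma+1/|s|)$ inside the integral before applying the monotonicity, whereas you bound it uniformly on the support first.
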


\begin{proof}
 For first and third inequalities, the proof is the same as in \cite{C-DS}.
We have to pay attention only to \eqref{est:d_t-a^e}. As $\rho'$ has null integral, the relation
$$
\d_ta_\veps(t)\,=\,\frac{1}{\veps^2}\int_{|s|\leq\veps}\rho'\left(\frac{s}{\veps}\right)\,
\left(a(t-s)-a(t)\right)ds
$$
holds, and hence, taking advantage of \eqref{est:lip-LZ}, it implies
$$
\left|\d_ta_\veps(t)\right|\,\leq\,\frac{C}{\veps^2}\,\int_{|s|\leq\veps}
\left|\rho'\left(\frac{s}{\veps}\right)\right|\,|s|\,\log^2\!\left(1+\gamma+\frac{1}{|s|}\right)ds\,.
$$
Observing that the function $\sigma\mapsto\sigma\log^2(1+\gamma+1/\sigma)$ is increasing in the interval $[0,1]$, and so
does in $[0,\veps]$, allows us to complete the proof.
\end{proof}

\subsection{Low regularity symbols and calculus}

For the analysis of our strictly hyperbolic problem, it's important to pass from $LZ_t-LL_x$ functions to more general
symbols in variables $(t,x,\xi)$ which have this same regularity in $t$ and $x$. 

We want to investigate properties of these symbols and of the associated operators. For reasons which will appear
clear in the sequel, we will have to take advantage not of the classical paradifferential calculus, but of the calculus
with parameters. Therefore, we will allow the symbols to depend also on a parameter $\g\geq1$.

We point out that in our calculus the time can be treated as an additional parameter, while $\xi$ represents, as usual, the
dual variable to $x$.

Let us start with a definition (see also \cite[Def. B.4]{M-Z}).
\begin{defin}\label{d:symbols}
 A \emph{symbol of order $m+\delta\log$} (for $m,\delta\,\in\,\R$) is a function $a(t,x,\xi,\g)$ which is locally bounded on
$[0,T_0]\times\R^N\times\R^N\times[1,+\infty[\,$, of class $\mc{C}^\infty$ with respect to $\xi$ and which satisfy the following
property: for all $\alpha\in\N^N$, there exists a constant $C_\alpha>0$ such that
\begin{equation} \label{est:def_symb}
 \left|\d^\alpha_\xi a(t,x,\xi,\g)\right|\,\leq\,C_\alpha\,\left(\g+|\xi|\right)^{m-|\alpha|}\,
\log^\delta\left(1+\g+|\xi|\right)
\end{equation}
for all $(t,x,\xi,\g)$.
\end{defin}

So, take a symbol $a(t,x,\xi,\g)$ of order $m\geq0$, which is log-Zygmund in $t$ and log-Lipschitz in $x$,
uniformly with respect to the other variables.
Now we smooth $a$ out with respect to time, as done in \eqref{eq:a^e}.
Next lemma provides us some estimates on classical symbols
associated to $a_\veps$ and its time derivatives.
\begin{lemma} \label{l:lzll-symb}
 The classical symbols associated to $a_\veps$ and its time derivatives satisfy:
\begin{eqnarray*}
 \left|\d^\alpha_\xi\sigma_{a_\veps}\right| & \leq & C_\alpha\left(\gamma+|\xi|\right)^{m-|\alpha|} \\
\left|\d^\beta_x\d^\alpha_\xi\sigma_{a_\veps}\right| & \leq & C_{\beta,\alpha}\left(\gamma+|\xi|\right)^{m-|\alpha|+|\beta|-1}\,
\log\left(1+\gamma+|\xi|\right) \\
\left|\d^\alpha_\xi\sigma_{\d_ta_\veps}\right| & \leq & C_\alpha\left(\gamma+|\xi|\right)^{m-|\alpha|}\,
\log^2\!\!\left(1+\gamma+\frac{1}{\veps}\right) \\
\left|\d^\beta_x\d^\alpha_\xi\sigma_{\d_ta_\veps}\right| & \leq & C_{\beta,\alpha}\,\,\left(\gamma+|\xi|\right)^{m-|\alpha|+|\beta|-1}\,
\log\left(1+\gamma+|\xi|\right)\,\frac{1}{\veps} \\
\left|\d^\alpha_\xi\sigma_{\d^2_ta_\veps}\right| & \leq & C_\alpha\left(\gamma+|\xi|\right)^{m-|\alpha|}\,
\log\!\!\left(1+\gamma+\frac{1}{\veps}\right)\,\frac{1}{\veps} \\
\left|\d^\beta_x\d^\alpha_\xi\sigma_{\d^2_ta_\veps}\right| & \leq & C_{\beta,\alpha}\,\left(\gamma+|\xi|\right)^{m-|\alpha|+|\beta|-1}\,
\log\left(1+\gamma+|\xi|\right)\,\frac{1}{\veps^2}\,\,,
\end{eqnarray*}
where all the constants which occur here don't depend on the parameter $\g$.
\end{lemma}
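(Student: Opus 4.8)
The plan is to reduce everything to the defining symbol estimate \eqref{est:def_symb} together with the mapping properties of the paradifferential smoothing operator $G^\psi$ collected in Lemma \ref{l:G}, and the estimates on $a_\veps$, $\d_t a_\veps$, $\d^2_t a_\veps$ coming from Proposition \ref{p:LZ-reg}. Recall that $\sigma_{a_\veps}(x,\xi)=\bigl(G^\psi(\cdot,\xi)*_x a_\veps(\cdot,\xi)\bigr)(x)$, and that taking $\d_\xi^\alpha$ (resp. $\d_x^\beta$) of this convolution can be distributed onto either factor; since $\d_x$ falls on the convolution kernel as $\d_x G^\psi$ while leaving $a_\veps(\cdot,\xi)$ untouched, I would always put \emph{all} the $x$-derivatives on $G^\psi$ and the $\xi$-derivatives split by Leibniz between $G^\psi$ and $a_\veps(\cdot,\xi)$.

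For the bounds \emph{without} $x$-derivatives ($\beta=0$), one differentiates under the convolution in $\xi$, uses $\|\d_x^0\d_\xi^{\alpha'}G^\psi(\cdot,\xi)\|_{L^1}\le C(\g+|\xi|)^{-|\alpha'|}$ from \eqref{pd_est:G_1}, and estimates $\|\d_\xi^{\alpha-\alpha'}a_\veps(\cdot,\xi)\|_{L^\infty_x}\le C_\alpha(\g+|\xi|)^{m-|\alpha-\alpha'|}$ from the order-$m$ hypothesis \eqref{est:def_symb} (with $\delta=0$); Young's inequality then gives $|\d_\xi^\alpha\sigma_{a_\veps}|\le C_\alpha(\g+|\xi|)^{m-|\alpha|}$. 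The only change for $\sigma_{\d_t a_\veps}$ and $\sigma_{\d^2_t a_\veps}$ is that the symbol being convolved is now $\d_t a_\veps(\cdot,\xi)$ or $\d^2_t a_\veps(\cdot,\xi)$, whose $L^\infty_x$ bounds carry the extra factors $\log^2(1+\g+1/\veps)$ and $\frac1\veps\log(1+\g+1/\veps)$ respectively, by \eqref{est:d_t-a^e}--\eqref{est:d_tt-a^e} applied uniformly in $(x,\xi)$ (the $\xi$-derivatives are harmless since $a$ is order $m$ in $\xi$ uniformly in its other arguments, and $\d_t$ commutes with $\d_\xi$); this yields exactly the third and fifth inequalities.

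For the bounds \emph{with} $x$-derivatives, here is where the log-Lipschitz regularity in $x$ must be used — and I expect this to be the main (though still routine) point. Writing $\d_x^\beta\d_\xi^\alpha\sigma_{a_\veps}=\sum_{\alpha'\le\alpha}\binom{\alpha}{\alpha'}\bigl(\d_x^\beta\d_\xi^{\alpha'}G^\psi(\cdot,\xi)\bigr)*_x\bigl(\d_\xi^{\alpha-\alpha'}a_\veps(\cdot,\xi)\bigr)$, one cannot simply bound this by $\|\d_x^\beta\d_\xi^{\alpha'}G^\psi\|_{L^1}\,\|a_\veps\|_{L^\infty}$, because \eqref{pd_est:G_1} with $\beta\neq0$ gives a \emph{positive} power $(\g+|\xi|)^{+|\beta|}$ and loses the gain. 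Instead, since $G^\psi(\cdot,\xi)$ has integral normalized appropriately and (for $|\beta|\ge1$) its $x$-derivatives have vanishing integral, one rewrites the convolution against the \emph{increment} $a_\veps(x-y,\xi)-a_\veps(x,\xi)$ in $x$; the $LL_x$ hypothesis on $a$ (hence on $a_\veps$, uniformly in $\veps$ by \eqref{est:a^e-a}-type reasoning, or directly since convolution in $t$ preserves the $x$-modulus of continuity) bounds this increment by $C|y|\log(1+\g+1/|y|)$, and then the weighted kernel estimate \eqref{pd_est:G_2}, $\bigl\||\cdot|\log(2+\tfrac1{|\cdot|})\,\d_x^\beta\d_\xi^{\alpha'}G^\psi(\cdot,\xi)\bigr\|_{L^1}\le C(\g+|\xi|)^{-|\alpha'|+|\beta|-1}\log(1+\g+|\xi|)$, absorbs it and produces precisely the $(\g+|\xi|)^{m-|\alpha|+|\beta|-1}\log(1+\g+|\xi|)$ factor claimed. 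For $\sigma_{\d_t a_\veps}$ and $\sigma_{\d^2_t a_\veps}$ the same argument applies to the increments of $\d_t a_\veps$ and $\d^2_t a_\veps$ in $x$; since $\d_t$ commutes with the $x$-translation, these increments still have $LL_x$-type modulus but now carrying the amplitude of $\d_t a_\veps$, $\d^2_t a_\veps$, which on differentiating the mollifier in $t$ contributes the extra $\frac1\veps$ and $\frac1{\veps^2}$ (one factor $\frac1\veps$ per time derivative from $\rho_\veps'$, $\rho_\veps''$), giving the fourth and sixth inequalities. Throughout, the constants depend only on the constants $C_\alpha$ of \eqref{est:def_symb}, on $\|a\|_{LZ_t}+\|a\|_{LL_x}$ and on the $G^\psi$-constants of Lemma \ref{l:G}, all of which are independent of $\g$ by Remark \ref{r:indep_gamma}; so the final constants are $\g$-independent, as asserted.
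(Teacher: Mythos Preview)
Your proposal is correct and follows essentially the same approach as the paper: the $\beta=0$ estimates come from Young's inequality together with \eqref{pd_est:G_1} and the pointwise bounds of Proposition~\ref{p:LZ-reg}, while for $|\beta|\ge1$ one exploits $\int\d_x^\beta G^\psi(y,\xi)\,dy=0$ to subtract $a_\veps(t,x,\xi,\g)$ and then invokes the $LL_x$ modulus against the weighted kernel bound \eqref{pd_est:G_2}. The only cosmetic difference is that for the fourth and sixth inequalities the paper writes out the Fubini step explicitly---interchanging the $t$-mollification with the $x$-convolution so that the $LL_x$ increment is taken on $a$ itself and the factors $1/\veps$, $1/\veps^2$ come from $\|\rho'_\veps\|_{L^1}$, $\|\rho''_\veps\|_{L^1}$---whereas you phrase the same computation as ``$\d_t^k a_\veps$ is $LL_x$ with constant $\sim\veps^{-k}$''; the two formulations are equivalent.
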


\begin{proof}
 The first inequality is a quite easy computation.

For the second one, we have to observe that
$$
\int\d_iG(x-y,\xi)dx\,=\,\int\d_iG(z,\xi)dz\,=\,\int\mc{F}^{-1}_\eta\left(\eta_i\,\psi(\eta,\xi)\right)dz\,=\,
\left(\eta_i\,\psi(\eta,\xi)\right)_{|\eta=0}\,=\,0\,.
$$
So, we have
$$
\d_i\sigma_{a_\veps}\,=\,\int\d_iG(y,\xi)\left(a_\veps(t,x-y,\xi,\g)-a_\veps(t,x,\xi,\g)\right)dy\,,
$$
and from this, remembering lemma \ref{l:G}, we get the final control.

The third estimate immediately follows from the hypothesis on $a$ and from \eqref{est:d_t-a^e}.

Moreover, in the case of space derivatives, we can take advantage once again of the fact that $\d_iG$ has null integral:
\begin{eqnarray*}
 \d_i\sigma_{\d_ta_\veps} & = & \int\d_iG(x-y,\xi)\,\d_ta_\veps(t,y,\xi,\g)\,dy \\
& = & \int_{\R_s}\frac{1}{\veps^2}\,\,\rho'\left(\frac{t-s}{\veps}\right)
\left(\int_{\R^N_y}\d_iG(y,\xi)\left(a(s,x-y,\xi,\g)-a(s,x,\xi,\g)\right)dy\right)ds\,.
\end{eqnarray*}
Hence, the estimate follows from the log-Lipschitz continuity hypothesis and from inequality \eqref{pd_est:G_2}.

About the $\d^2_ta_\veps$ term, the first estimate comes from \eqref{est:d_tt-a^e}, while for the second one we argue as before:
\begin{eqnarray*}
 \d_i\sigma_{\d^2_ta_\veps} & = & \int\d_iG(x-y,\xi)\,\d^2_ta_\veps(t,y,\xi,\g)\,dy \\
& = & \int_{\R^N_y}\d_iG(x-y,\xi)\,\frac{1}{\veps^3}
\left(\int_{\R_s}\rho''\left(\frac{t-s}{\veps}\right)\left(a(s,y,\xi,\g)-a(s,x,\xi,\g)\right)ds\right)dy \\
& = & \frac{1}{\veps^3}\int_{\R_s}\rho''\left(\frac{t-s}{\veps}\right)
\left(\int_{\R^N_y}\d_iG(y,\xi)\left(a(s,x-y,\xi,\g)-a(s,x,\xi,\g)\right)dy\right)ds\,,
\end{eqnarray*}
and the thesis follows again from log-Lipschitz continuity and from \eqref{pd_est:G_2}.
\end{proof}

\begin{rem} \label{r:symb_calc}
Note that first and second inequalities are satisfied also by the symbol $a$ (not smoothed in time).
\end{rem}

\medbreak
Now let us quote some basic facts on symbolic calculus, which follow from previous lemma. Before doing this, we
recall a definition (see also \cite[Def. B.8]{M-Z}).
\begin{defin} \label{d:op_order}
 We say that an operator $P$ is of order $\,m+\delta\log\,$ if, for every $(s,\alpha)\in\R^2$ and every $\g\geq1$,
$P$ maps $H^{s+\alpha\log}_\g$ into $H^{(s-m)+(\alpha-\delta)\log}_\g$ continuously.
\end{defin}

\begin{prop} \label{p:symb-calc}
 \begin{itemize}
 \item[(i)] Let $a$ be a symbol of order $m$ which is $L^\infty$ in the $x$ variable.
Then $T_a$ is of order $m$, i.e. it maps $H^{s+\alpha\log}_\g$ into $H^{s-m+\alpha\log}_\g$.
  \item[(ii)] Let us take two symbols $a$, $b$ of order $m$ and $m'$ respectively. Suppose that $a$, $b$ are $LL$ in the $x$ variable.
The composition  of the associated operators can be approximated by the symbol associated to the product of
these symbols, up to a remainder term:
$$
T_{a}\,\circ\,T_b\,=\,T_{ab}\,+\,R\,.
$$
$R$ has order $m+m'-1-\log$: it maps $H^{s+\alpha\log}_\g$ into $H^{s-m-m'+1+(\alpha+1)\log}_\g$.
\item[(iii)] Let $a$ be a symbol of order $m$ which is $LL$ in the $x$ variable. The adjoint (over $L^2$) operator of $T_a$ is,
up to a remainder operator, $T_{\oline{a}}$. The remainder operator is of order $m-1+\log$, and it maps
$H^{s+\alpha\log}_\g$ into $H^{s-m+1+(\alpha+1)\log}_\g$.
 \end{itemize}
\end{prop}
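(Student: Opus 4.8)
The statement to prove is Proposition~\ref{p:symb-calc}, the symbolic calculus for paradifferential operators with parameters associated to low-regularity symbols.

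\medskip

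The plan is to reduce all three parts to the corresponding statements for smooth symbols in the parameter-dependent paradifferential calculus of \cite{M-Z}, plus a careful bookkeeping of the logarithmic losses coming from the limited regularity in $x$. Throughout, the key quantitative inputs are Lemma~\ref{l:G} (the $L^1$ bounds on $G^\psi$ and on $|\cdot|\log(2+1/|\cdot|)\,G^\psi$) and Lemma~\ref{l:lzll-symb} together with Remark~\ref{r:symb_calc}: for a symbol $a$ of order $m$ that is merely $LL$ in $x$, the classical symbol $\sigma_a$ satisfies $|\d_\xi^\alpha\sigma_a|\leq C_\alpha(\g+|\xi|)^{m-|\alpha|}$ and, crucially, each $x$-derivative costs only $(\g+|\xi|)^{|\beta|-1}\log(1+\g+|\xi|)$ rather than the $(\g+|\xi|)^{|\beta|}$ one would get for a $\mc{C}^\infty_b$ symbol. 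This is exactly the source of the ``$+\log$'' in the remainders.

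\medskip

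For part (i), since $a$ is $L^\infty$ in $x$, the symbol $\sigma_a(x,\xi,\g)$ obeys $|\d_\xi^\alpha\sigma_a|\leq C_\alpha(\g+|\xi|)^{m-|\alpha|}$ by the first estimate of Lemma~\ref{l:lzll-symb}; moreover, by construction $\sigma_a(\cdot,\xi,\g)$ is spectrally supported in $|\eta|\leq\veps_2(\g+|\xi|)$, so $\sigma_a$ is a (parameter-dependent) symbol of order $m$ in the sense of \cite[Appendix~B]{M-Z} with the extra spectral localization that makes $T_a=\sigma_a(x,D_x)$ bounded $H^{s+\alpha\log}_\g\to H^{s-m+\alpha\log}_\g$; I would simply invoke the $L^2$-continuity theorem for such operators, noting the $\log^{2\alpha}$ weight is transported trivially. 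For parts (ii) and (iii) I would write the standard oscillatory-integral expansion for the composition $T_a\circ T_b$ (resp.\ the adjoint): the principal symbol is $\sigma_a\,\sigma_b$ (resp.\ $\oline{\sigma_a}$), and the remainder is governed by an integral involving $\d_\xi\sigma_a$ and $\d_x\sigma_b$ (resp.\ $\d_\xi\d_x\sigma_a$). Here is where the $LL$ hypothesis enters: by Lemma~\ref{l:lzll-symb} (second estimate), $|\d_x\d_\xi^\alpha\sigma_b|\leq C(\g+|\xi|)^{m'-|\alpha|-1}\log(1+\g+|\xi|)$, so one derivative in $x$ gains a full power of $(\g+|\xi|)^{-1}$ at the cost of a single logarithm; combined with the $(\g+|\xi|)^{-1}$ already gained from the $\d_\xi$ in the expansion, the remainder symbol has order $m+m'-1$ times $\log(1+\g+|\xi|)$, i.e.\ order $m+m'-1+\log$. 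Then by Definition~\ref{d:op_order} $R$ maps $H^{s+\alpha\log}_\g$ into $H^{s-(m+m'-1)+(\alpha-1)\log}_\g=H^{s-m-m'+1+(\alpha+1)\log}_\g$, as claimed; the adjoint case is identical with $\sigma_b$ replaced by (derivatives of) $\oline{\sigma_a}$.

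\medskip

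The main obstacle is making the remainder expansion rigorous \emph{uniformly in the parameter $\g$} and with symbols of only finite smoothness in $x$: the usual proof writes $R$ via a Taylor expansion of $\sigma_a(x,\xi+\zeta)$ and an integration by parts, but here $\sigma_a$ is only Lipschitz-type in $x$, so one must control the remainder through the convolution structure $\sigma_a=G^\psi*_x a$ and use estimate \eqref{pd_est:G_2} of Lemma~\ref{l:G} — exactly as in the proof of Lemma~\ref{l:lzll-symb} — rather than differentiating $\sigma_a$ in $x$ twice. Concretely, I would express the kernel of $R$ as a double integral against $G^\psi$ and its $\xi$-derivatives, insert the difference $a(s,x-y,\xi,\g)-a(s,x,\xi,\g)$, bound it by $|y|\log(1+\g+1/|y|)\|a\|_{LL}$, and then close using \eqref{pd_est:G_2}; the spectral localization of $\psi$ guarantees all frequency integrals are over $|\eta|\lesssim\g+|\xi|$, which is what keeps every constant independent of $\g$ (cf.\ Remark~\ref{r:indep_gamma}). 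Once the kernel bound is in hand, $L^2$-continuity on the scale $H^{s+\alpha\log}_\g$ follows from Schur's lemma together with the dyadic characterization of Proposition~\ref{p:log-H}, which by the last paragraph of the previous subsection is valid in the $\g$-dependent setting with equivalent norms.
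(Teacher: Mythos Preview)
The paper does not actually prove this proposition: it is introduced with the sentence ``Now let us quote some basic facts on symbolic calculus, which follow from previous lemma'' and then simply stated, the implicit reference being to the general machinery of paradifferential calculus with parameters in \cite{M-Z} combined with the symbol estimates of Lemma~\ref{l:lzll-symb}. Your sketch is therefore more detailed than anything in the paper, and it follows the right line: once Lemma~\ref{l:lzll-symb} is available, parts (i)--(iii) are the standard continuity/composition/adjoint theorems for symbols in the class it describes.

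Two small points are worth cleaning up. First, your worry in the last paragraph that $\sigma_a$ is ``only Lipschitz-type in $x$'' is misplaced: by construction $\sigma_a=G^\psi*_x a$ is $\mc{C}^\infty$ in $x$ for each fixed $\xi$, and Lemma~\ref{l:lzll-symb} already gives bounds on \emph{all} $x$-derivatives $\d_x^\beta\d_\xi^\alpha\sigma_a$ (the exponent is $m-|\alpha|+|\beta|-1$ for general $\beta$). So the standard oscillatory-integral expansion applies directly to $\sigma_a,\sigma_b$; there is no need to return to the convolution structure and estimate \eqref{pd_est:G_2} a second time --- that work is already packaged in the lemma. Second, in part (ii) the composition $T_a\circ T_b$ has principal symbol $\sigma_a\sigma_b$, but the target operator is $T_{ab}$ with symbol $\sigma_{ab}$; you should also note that $\sigma_a\sigma_b-\sigma_{ab}$ is itself of order $m+m'-1+\log$ (another routine application of the $LL$ hypothesis and Lemma~\ref{l:G}), otherwise the identity $T_aT_b=T_{ab}+R$ is not quite justified.
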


Let us end this subsection stating a basic positivity estimate.
In this situation, paradifferential calculus with parameters comes into play.
\begin{prop} \label{p:pos}
 Let $a(t,x,\xi,\g)$ be a symbol of order $m$, which is $LL$ in the $x$ variable and such that
$$
\Re\,a(t,x,\xi,\g)\,\geq\,\lambda_0\,\left(\gamma+|\xi|\right)^m\,.
$$

Then, there exists a constant $\lambda_1$, depending only on $|a|_{LL}$ and on $\lambda_0$ (so not on $\g$), such that,
for $\gamma$ large enough, one has
$$
\Re\!\left(T_au,u\right)_{L^2(\R^N_x)}\,\geq\,\lambda_1\,\|u\|^2_{H^{m/2}_\gamma}\,.
$$
\end{prop}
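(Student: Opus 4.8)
The plan is to reduce the statement to the classical (Gårding-type) positivity estimate for paradifferential operators with parameters, which is the analogue of Proposition \ref{p:pos} for \emph{symbols of order $0$ that are merely $L^\infty$ in $x$}; such a result is exactly what the parameter-dependent calculus of \cite{M-1986} and \cite[Appendix B]{M-Z} is designed to provide, and the role of the parameter $\g$ is precisely to absorb the lower-order errors so that the estimate holds with a positive constant once $\g$ is chosen large. So the first step is a reduction to order $m=0$: given $a$ of order $m$ with $\Re\,a\geq\lambda_0(\g+|\xi|)^m$, set $\Lambda^{m/2}=\Lambda^{m/2}(D_x,\g)$ (the Fourier multiplier with symbol $\Lambda(\xi,\g)^{m/2}=(\g^2+|\xi|^2)^{m/4}$) and write $b(t,x,\xi,\g):=\Lambda(\xi,\g)^{-m/2}\,a(t,x,\xi,\g)\,\Lambda(\xi,\g)^{-m/2}$, a symbol of order $0$ that is still $LL$ in $x$ with $\Re\,b\geq\lambda_0'$ for a suitable $\lambda_0'>0$ depending only on $\lambda_0$ and the ellipticity constants of $\Lambda$. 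By the symbolic calculus of Proposition \ref{p:symb-calc}, $T_a=\Lambda^{m/2}\,T_b\,\Lambda^{m/2}+R$ with $R$ of order $m-1+\log$; hence $\Re(T_au,u)_{L^2}=\Re(T_b\,\Lambda^{m/2}u,\Lambda^{m/2}u)_{L^2}+\Re(Ru,u)_{L^2}$, and it suffices to prove $\Re(T_bv,v)_{L^2}\geq\lambda_1'\|v\|_{L^2}^2$ for $v=\Lambda^{m/2}u$ and to bound the remainder term by $C\|u\|^2_{H^{(m-1)/2}_\g}\leq \tfrac{\lambda_1'}{2}\|\Lambda^{m/2}u\|_{L^2}^2$ for $\g$ large, since $\|u\|_{H^{(m-1)/2}_\g}^2\leq\g^{-1}\|u\|^2_{H^{m/2}_\g}$.

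The second step is the order-$0$ estimate itself. Split $b=\Re\,b+i\,\Im\,b$; the operator $T_{i\,\Im b}$ has purely imaginary principal part, so $\Re(T_{i\,\Im b}v,v)_{L^2}=\tfrac12((T_{i\,\Im b}+T_{i\,\Im b}^*)v,v)_{L^2}$, and by part (iii) of Proposition \ref{p:symb-calc} the operator $T_{i\,\Im b}+T_{i\,\Im b}^*=T_{i\,\Im b}+T_{\overline{i\,\Im b}}+R_1=R_1$ up to a remainder $R_1$ of order $-1+\log$, which contributes $O(\g^{-1/2+})\|v\|_{L^2}^2$ and is harmless for $\g$ large. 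Thus one is reduced to the case $b=\Re\,b\geq\lambda_0'$ real-valued. For such $b$ write $c:=(\Re\,b-\lambda_0'/2)^{1/2}$; since $\Re\,b\in LL$ in $x$ and is bounded below by $\lambda_0'/2>0$, the square root $c$ is again a symbol of order $0$ that is $LL$ in $x$ (here one uses that $LL$ is stable under composition with smooth functions that are, say, smooth on a neighbourhood of the range — a standard estimate using Proposition \ref{p:dyadic-LL}). By Proposition \ref{p:symb-calc}(ii)--(iii), $T_c^*T_c=T_{c^2}+R_2=T_{\Re b}-\tfrac{\lambda_0'}{2}\Id+R_2$ with $R_2$ of order $-1+\log$; therefore
\begin{equation*}
\Re(T_{\Re b}v,v)_{L^2}=\|T_cv\|_{L^2}^2+\frac{\lambda_0'}{2}\|v\|_{L^2}^2-\Re(R_2v,v)_{L^2}\geq\frac{\lambda_0'}{2}\|v\|_{L^2}^2-C\,\g^{-1/2+}\|v\|_{L^2}^2,
\end{equation*}
and choosing $\g$ large enough absorbs the last term, giving $\Re(T_bv,v)_{L^2}\geq\lambda_1'\|v\|_{L^2}^2$ with $\lambda_1'=\lambda_0'/4$, say, which depends only on $\lambda_0$ and $|b|_{LL}$, hence only on $\lambda_0$ and $|a|_{LL}$. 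Tracking the reduction of the first step then yields the claimed constant $\lambda_1$.

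The main obstacle — and the reason the parameter $\g$ is indispensable — is that all the error terms produced by the symbolic calculus (the commutator remainder $R$ from the conjugation by $\Lambda^{m/2}$, the adjoint remainder $R_1$, and the composition remainder $R_2$ in the square-root trick) carry a logarithmic \emph{loss} because the symbols are only $LL$ in $x$, not $\mc{C}^\infty$; they are of order $m-1+\log$, $-1+\log$, $-1+\log$ respectively, and a priori each such operator is bounded on $H^s_\g$ only with a norm involving a factor $\log(1+\g+|\xi|)$ that does \emph{not} decay. The point is that, in the parameter-dependent framework, being of order $-1+\log$ means mapping $H^{s}_\g\to H^{s+1-\log}_\g$, and on the relevant range of frequencies $|\xi|\gtrsim\g$ one has $\Lambda(\xi,\g)^{-1}\log(1+\g+|\xi|)\lesssim \g^{-1}\log(1+\g)\to0$ as $\g\to\infty$; so the remainders are genuinely small \emph{uniformly in $u$} once $\g$ is large, which is exactly what makes the absorption arguments legitimate. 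Making this quantitative — i.e. extracting the explicit negative power of $\g$ from "order $-1+\log$" via the norm equivalences of the $H^{s+\alpha\log}_\g$ spaces recalled in the previous subsection — is the one genuinely delicate bookkeeping step; everything else is the standard Gårding/square-root argument.
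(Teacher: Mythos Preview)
Your argument is correct, and in fact considerably more detailed than the paper's own proof, which dispatches the proposition in a single sentence by invoking Theorem~B.18 of \cite{M-Z} (the G\aa rding inequality in the parameter-dependent paradifferential calculus) together with the composition and adjoint remainder estimates of Proposition~\ref{p:symb-calc}. What you have sketched is essentially a self-contained proof of that cited result: the reduction to order $0$ by conjugation with $\Lambda^{m/2}$, the elimination of the imaginary part via the adjoint calculus, and the square-root trick $c=(\Re b-\lambda_0'/2)^{1/2}$ are exactly the standard ingredients behind such a theorem. The paper's route buys brevity by outsourcing the work to \cite{M-Z}; yours buys transparency, making explicit the mechanism by which the parameter $\g$ enters --- namely, that the remainders of order $-1+\log$ produced by the $LL$ symbolic calculus contribute $O\bigl(\g^{-1}\log(1+\g)\bigr)\|v\|_{L^2}^2$ to the quadratic form and can therefore be absorbed for $\g$ large.

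One small cosmetic point: your shorthand ``$\g^{-1/2+}$'' for the size of the remainder contributions is vaguer than necessary. Since an operator $R$ of order $-1+\log$ maps $L^2=H^0_\g$ into $H^{1-\log}_\g$, one has directly
\[
\bigl|(Rv,v)_{L^2}\bigr|\;\leq\;\|Rv\|_{L^2}\,\|v\|_{L^2}\;\leq\;\sup_{\xi}\frac{\log(1+\g+|\xi|)}{\Lambda(\xi,\g)}\;\|Rv\|_{H^{1-\log}_\g}\,\|v\|_{L^2}\;\leq\;C\,\frac{\log(1+\g)}{\g}\,\|v\|_{L^2}^2,
\]
which is the sharp rate and makes the absorption step entirely explicit.
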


\begin{proof}
 The result is an immediate consequence of Theorem B.18 of \cite{M-Z}, together with Proposition \ref{p:symb-calc} about the
remainder for composition and adjoint operators.
\end{proof}

\begin{rem} \label{r:pos}
 Let us note the following fact, which comes again from Theorem B.18 of \cite{M-Z}.
If the positive symbol $a$ has low regularity in time and we smooth it out by convolution with respect to this variable,
we obtain a family $\left(a_\veps\right)_\veps$ of positive symbols, with same constant $\lambda_0$.
Now, all the paradifferential operators associated to these symbols will be positive operators, uniformly in $\veps$: i.e. the
constant $\lambda_1$ of the previous inequality can be chosen independently of $\veps$.
\end{rem}

Let us observe that the previous proposition generalizes Corollary 3.12 of \cite{C-M} (stated for the paraproduct by a
positive $LL$ function) to the more general case of a paradifferential operator related to a strictly positive symbol of order $m$.

Finally, thanks to Theorem B.17 of \cite{M-Z} about the remainder operator for the adjoint, we have the following corollary,
which turns out to be fundamental in our energy estimates.

\begin{coroll} \label{c:pos}
 Let $a$ be a positive symbol of order $1$ and suppose that $a$ is $LL$ in the $x$ variable.

Then there exists $\gamma\geq1$, depending only on the symbol $a$, such that
$$
\left\|T_au\right\|_{L^2}\,\sim\,\left\|u\right\|_{H^1_\g}
$$ 
for all $u\in H^1_\g(\R^N)$.
\end{coroll}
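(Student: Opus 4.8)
The plan is to deduce Corollary~\ref{c:pos} from Proposition~\ref{p:pos} together with the symbolic calculus of Proposition~\ref{p:symb-calc}. First I would obtain the lower bound. Apply Proposition~\ref{p:pos} with $m=1$: since $a$ is a positive symbol of order $1$ which is $LL$ in $x$, there exist $\lambda_1>0$ and a $\gamma$ (depending only on $a$) such that $\Re\,(T_au,u)_{L^2}\geq\lambda_1\|u\|^2_{H^{1/2}_\g}$ for all $u$. On the other hand $T_a$ has order $1$ (Proposition~\ref{p:symb-calc}(i)), hence is bounded from $H^1_\g$ to $L^2$, so $|(T_au,u)_{L^2}|\leq\|T_au\|_{L^2}\|u\|_{L^2}\leq C\|u\|_{H^1_\g}\|u\|_{L^2}$. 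Interpolating (or simply using $\|u\|_{L^2}\leq\|u\|_{H^{1/2}_\g}\leq C\|u\|_{H^1_\g}$ together with the Cauchy--Schwarz step) this already gives $\|u\|^2_{H^{1/2}_\g}\leq C\|T_au\|_{L^2}\|u\|_{L^2}$. To convert this into a clean $L^2$--$H^1_\g$ equivalence, the natural move is to replace $u$ by $\Pi^{?}$-type regularizations or, more directly, to run the argument with $T_a$ composed with a first-order elliptic operator; I expect the cleanest route is the one indicated in the statement's reference to Theorem~B.17 of \cite{M-Z}, namely to use the sharp form of the positivity estimate which, for a symbol of order exactly $1$, yields $\Re\,(T_au,u)_{L^2}\geq\lambda_1\|u\|^2_{H^{1/2}_\g}$ \emph{and}, after one more application to $\Lambda(D,\g)^{1/2}u$ or by the Lax--Milgram/G\aa rding mechanism, controls $\|T_au\|_{L^2}$ from below by $\|u\|_{H^1_\g}$.

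More concretely, here is the step I would actually carry out. Consider the operator $T_a^*T_a$. By Proposition~\ref{p:symb-calc}(iii), $T_a^*=T_{\bar a}+R_1$ with $R_1$ of order $0+\log$, and by Proposition~\ref{p:symb-calc}(ii), $T_{\bar a}T_a=T_{|a|^2}+R_2$ with $R_2$ of order $1+\log$ (since $\bar a a$ has order $2$ and the remainder gains one derivative up to a log loss). Therefore $T_a^*T_a=T_{|a|^2}+R$ where $R$ maps $H^1_\g$ into $H^{\log}_\g\hookrightarrow H^{0}_\g=L^2$... wait, more carefully: $R$ has order $1+\log$, so it maps $H^{1}_\g$ into $H^{0-\log}_\g$, which embeds into $L^2$ with a loss — one must check the log powers carefully, but the upshot is that $R$ is a compact-type perturbation that is lower order than the main term. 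Now $|a|^2=(\Re a)^2+(\Im a)^2\geq\lambda_0^2(\gamma+|\xi|)^2$, so $T_{|a|^2}$ is a positive symbol of order $2$ which is $LL$ in $x$ (the product of two $LL$ symbols is $LL$), and Proposition~\ref{p:pos} with $m=2$ gives $\Re\,(T_{|a|^2}u,u)_{L^2}\geq c\,\|u\|^2_{H^1_\g}$ for $\gamma$ large. Since $(T_a^*T_au,u)_{L^2}=\|T_au\|^2_{L^2}$ is real, combining this with the control on $R$ (absorbing $|(Ru,u)_{L^2}|\leq\epsilon\|u\|^2_{H^1_\g}+C_\epsilon\|u\|^2_{H^{1/2}_\g}$ into the main term, possibly after increasing $\gamma$ so that the lower-regularity term is also dominated) yields $\|T_au\|^2_{L^2}\geq c'\|u\|^2_{H^1_\g}$.

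The reverse inequality $\|T_au\|_{L^2}\leq C\|u\|_{H^1_\g}$ is just Proposition~\ref{p:symb-calc}(i) (with $\alpha=0$, $m=1$, $s=1$): $T_a$ maps $H^1_\g$ continuously into $H^0_\g=L^2$. Putting the two bounds together gives $\|T_au\|_{L^2}\sim\|u\|_{H^1_\g}$ for all $u\in H^1_\g(\R^N)$, with $\gamma$ chosen large enough in terms of $a$ only, which is the claim. The main obstacle I anticipate is purely bookkeeping with the logarithmic orders: one must verify that the remainder $R=T_a^*T_a-T_{|a|^2}$, which carries a $+\log$ in its order, genuinely maps $H^1_\g$ into a space strictly better than $L^2$ (or at worst into $L^2$ with a gain allowing absorption), so that the G\aa rding-type inequality for $T_{|a|^2}$ is not spoiled; this is where invoking the precise statement of Theorem~B.17 of \cite{M-Z} — rather than re-deriving it — is essential, since that theorem is designed exactly to give the sharp remainder for the adjoint in this parameter-dependent, log-loss setting. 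Everything else is a routine combination of the already-established positivity and composition results.
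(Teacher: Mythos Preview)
Your approach is correct and matches what the paper does: the paper gives no detailed argument here, merely invoking Theorem~B.17 of \cite{M-Z} (the sharp adjoint remainder) together with the G\aa rding-type inequality of Proposition~\ref{p:pos}, and your route via $T_a^*T_a=T_{|a|^2}+R$ followed by Proposition~\ref{p:pos} applied to $|a|^2$ with $m=2$ is exactly the natural unpacking of that citation. The upper bound from Proposition~\ref{p:symb-calc}(i) is of course immediate.

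One correction on the point you flagged as your ``main obstacle'', since you have the mechanism slightly backwards. The remainder $R$, of order $1+\log$, does \emph{not} map $H^1_\gamma$ into anything better than $L^2$: it maps $H^1_\gamma$ into $H^{-\log}_\gamma$, which is strictly \emph{worse} than $L^2$ (you wrote the embedding in the wrong direction). But this is not what is needed. What matters is the quadratic form: by the mapping property and duality,
$$
|(Ru,u)_{L^2}|\;\leq\;C\,\|u\|^2_{H^{(1+\log)/2}_\gamma}\;=\;C\int\Lambda(\xi,\gamma)\,\log(1+\gamma+|\xi|)\,|\widehat u(\xi)|^2\,d\xi\,,
$$
and since $\Lambda(\xi,\gamma)\geq\gamma$ and $t\mapsto t^{-1}\log(2+t)$ is decreasing for $t\geq1$, the weight ratio satisfies $\Lambda^{-1}\log(1+\gamma+|\xi|)\leq C\,\gamma^{-1}\log(2+\gamma)$. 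Hence $|(Ru,u)_{L^2}|\leq C\,\gamma^{-1}\log(2+\gamma)\,\|u\|^2_{H^1_\gamma}$, and this is precisely the mechanism by which taking $\gamma$ large (depending only on the constants in the calculus, hence only on $a$) absorbs the remainder into the main term $c\,\|u\|^2_{H^1_\gamma}$. With this clarified, your bookkeeping concern disappears and the argument closes.
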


\section{Proof of the energy estimate for $L$}

Finally, we are able to tackle the proof of Theorem \ref{t:en-est}. We argue in a standard way: we prove
energy estimates, for some suitable energy associated to a solution of equation \eqref{eq:op}.

The key idea to the proof is to split the total energy into localized components $e_\nu$, each one of them associated
to the dyadic block $\Delta_\nu u$, and then to put all these pieces together (see also \cite{C-L} and \cite{C-DS}). Let us
see the proof into details.

%%%%%%%%%%%%%%%%%%%%%%%
\subsection{Approximate and total energy}
%%%%%%%%%%%%%%%%%%%%%%%%%%%%%%%%%%%%%%%%%%%%%%%%%%%%%%%%%%%%%%%%%%%%%%%%%%%

Let us first regularize coefficients $a_{ij}$ in the time variable by convolution, as done in \eqref{eq:a^e},
and let us define the $0$-th order symbol
$$
\alpha_\veps(t,x,\xi)\,:=\,\left(\gamma^2+|\xi|^2\right)^{-1/2}\,
\left(\gamma^2\,+\,\sum_{i,j}a_{ij, \veps}(t,x)\,\xi_i\,\xi_j\right)^{1/2}\,. 
$$
We take $\veps\,=\,2^{-\nu}$ (see also \cite{C-L} and \cite{C-DS}), and (for notation convenience) we will miss out the $\veps$.

Before going on, let us fix a real number $\gamma\geq1$, 
which will depend only on $\lambda_0$ and on the
$\sup_{i,j}\left|a_{ij}\right|_{LL_x}$, such that (see Corollary \ref{c:pos})
\begin{equation} \label{est:param}
\left\|T_{\alpha^{-1/2}}\,w\right\|_{L^2}\,\geq\,\frac{\lambda_0}{2}\,\left\|w\right\|_{L^2}\qquad\mbox{ and }\qquad
\left\|T_{\alpha^{1/2}(\g^2+|\xi|^2)^{1/2}}\,w\right\|_{L^2}\,\geq\,
\frac{\lambda_0}{2}\,\left\|w\right\|_{H^1}
\end{equation}
for all $w\in H^{\infty}$. 
Let us remark that the choice of $\gamma$ is equivalent to the choice of the parameter $\mu$ in (\ref{pd_eq:pp_symb})
(see Remark \ref{r:p-prod}): hence, from now on we will consider paraproducts starting from this ${\mu}$, according to
definition \eqref{pd_eq:pp_symb}, even if we will omit it in the notations.

Consider in \eqref{eq:op} a function $u\in\mc{C}^2([0,T_0];H^{\infty})$. 
We want to get energy estimates for $u$. 
We rewrite the equation using paraproduct operators by the coefficients $a_{ij}$:
$$
\d^2_tu =\sum_{i,j}\d_i\left(a_{ij}\,\d_ju\right)\,+\, Lu\\
= \sum_{i,j}\d_i\left(T_{a_{ij}} \d_ju\right) \,+\, \wtilde Lu, 
$$
where $\wtilde Lu= Lu + \sum_{i,j}\d_i\left((a_{ij}-T_{a_{ij}})\d_ju\right)$. Let us apply the operator $\Delta_\nu$: we get
\begin{equation} \label{eq:loc}
\d^2_tu_\nu\,=\sum_{i,j}\d_i\left(T_{a_{ij}}\,\d_ju_\nu\right)\,+\,
\sum_{i,j}\d_i\left(\left[\Delta_\nu,T_{a_{ij}}\right]\,\d_ju\right)+\,(\wtilde Lu)_\nu, 
\end{equation}
where $u_\nu=\Delta_\nu u$, $(\wtilde Lu)_\nu=\Delta_\nu (\wtilde Lu)$ and $\left[\Delta_\nu,T_{a_{ij}}\right]$ is the commutator
between $\Delta_\nu$ and the paramultiplication by $a_{ij}$.

Now, we set
\begin{eqnarray*}
 v_\nu(t,x) & := & T_{\alpha^{-1/2}}\,\d_tu_\nu\,-\,T_{\d_t(\alpha^{-1/2})}\,u_\nu \\
w_\nu(t,x) & := & T_{\alpha^{1/2}(\gamma^2+|\xi|^2)^{1/2}}\,u_\nu \\
z_\nu(t,x) & := & u_\nu
\end{eqnarray*}
and we define the approximate energy associated to the $\nu$-th component of $u$ (as done in \cite{C-DS}):
\begin{equation} \label{eq:appr_en}
 e_{\nu}(t)\,:=\,\left\|v_\nu(t)\right\|^2_{L^2}\,+\,\left\|w_\nu(t)\right\|^2_{L^2}\,+\,\left\|z_\nu(t)\right\|^2_{L^2}\,.
\end{equation}

\begin{rem} \label{r:w_nu}
Note that $\left\|w_\nu(t)\right\|^2_{L^2}\,\sim\,\left\|\nabla u_\nu\right\|^2_{L^2}\,\sim\,
2^{2\nu}\,\|u_\nu\|^2_{L^2}$, thanks to hypothesis \eqref{h:hyp} and the choice of the frequency $\mu$ from which the paraproduct
starts, recall also \eqref{est:param}.
\end{rem}

Now, we fix a $\theta\in\,]0,1[\,$, as required in the hypothesis, and we take a $\beta>0$ to be chosen later; we can define the
total energy associated to the solution $u$ to be the quantity
\begin{equation} \label{eq:tot_E}
 E(t)\,:=\,\sum_{\nu\geq0}e^{-2\beta(\nu+1)t}\,2^{-2\nu\theta}\,e_{\nu}(t)\,.
\end{equation}

It's not difficult to prove (see also inequality \eqref{est:d_t-u_nu} below) that there exist constants $C_\theta$ and
$C'_\theta$, depending only on the fixed $\theta$, for which one has:
\begin{eqnarray}
 \left(E(0)\right)^{1/2} & \leq & C_\theta\left(\|\d_tu(0)\|_{H^{-\theta}}\,+\,\|u(0)\|_{H^{-\theta+1}}\right) \label{est:E(0)} \\
\left(E(t)\right)^{1/2} & \geq & C'_\theta\left(\|\d_tu(t)\|_{H^{-\theta-\beta^*t}}\,+\,
\|u(t)\|_{H^{-\theta+1-\beta^*t}}\right)\,, \label{est:E(t)}
\end{eqnarray}
where we have set $\beta^*=\beta\left(\log2\right)^{-1}$.

%%%%%%%%%%%%%%%%%%%%%%%%%%%%%
\subsection{Time derivative of the approximate energy}
%%%%%%%%%%%%%%%%%%%%%%%%%%%%%%%%%%%%%%%%%%%%%%%%%%%%%%%%%%%%%%%%%%%%

Let's find an estimate on the time derivative of the energy. We start analysing each term of \eqref{eq:appr_en}.

\subsubsection{$z_\nu$ term}

For the third term we have:
\begin{equation} \label{eq:d_t-z}
 \frac{d}{dt}\|z_\nu(t)\|^2_{L^2}\,=\,2\,\Re\left(u_\nu\,,\,\d_tu_\nu\right)_{L^2}\,.
\end{equation}
Now, we have to control the term $\d_tu_\nu$: using positivity of operator $T_{\alpha^{-1/2}}$, we get
\begin{equation} \label{est:d_t-u_nu}
 \left\|\d_tu_\nu\right\|_{L^2}\;\leq\;C\,\left\|T_{\alpha^{-1/2}}\d_tu_\nu\right\|_{L^2}
\;\;\leq\;\;C\left(\|v_\nu\|_{L^2}\,+\,\left\|T_{\d_t\left(\alpha^{-1/2}\right)}u_\nu\right\|_{L^2}\right)
\;\leq\;C\left(e_{\nu}\right)^{1/2}\,.
\end{equation}
So, we find the estimate:
\begin{equation} \label{est:d_t-z}
 \frac{d}{dt}\|z_\nu(t)\|^2_{L^2}\,\leq\,C\,e_{\nu}(t)\,.
\end{equation}

\subsubsection{$v_\nu$ term}

Straightforward computations show that
$$
\d_tv_\nu(t,x)\,=\,T_{\alpha^{-1/2}}\d^2_tu_\nu\,-\,T_{\d^2_t(\alpha^{-1/2})}u_\nu\,.
$$

Therefore, keeping in mind relation \eqref{eq:loc}, we get:
\begin{eqnarray} \label{eq:d_t-v}
 \frac{d}{dt}\|v_\nu(t)\|^2_{L^2} & = & 2\,\Re\left(v_\nu\,,\,T_{\alpha^{-1/2}}\left(\wtilde Lu\right)_{\!\!\nu}\,\right)_{L^2}\,-\,2\,\Re\left(v_\nu\,,\,T_{\d^2_t(\alpha^{-1/2})}u_\nu\right)_{L^2} \\
& & \qquad\qquad+\,
2\sum_{i,j}\Re\left(v_\nu\,,\,T_{\alpha^{-1/2}}\d_i\left(T_{a_{ij}}\,\d_ju_\nu\right)\right)_{L^2} \nonumber \\
& & \qquad\qquad\qquad\qquad+\,2\sum_{i,j}\Re\left(v_\nu\,,\,T_{\alpha^{-1/2}}\d_i\left[\Delta_\nu,T_{a_{ij}}\right]\d_ju\right)_{L^2}\,. \nonumber
\end{eqnarray}

Obviously, we have
\begin{equation} \label{est:Lu_nu}
\left|2\,\Re\left(v_\nu\,,\,T_{\alpha^{-1/2}}\left(\wtilde Lu\right)_{\!\!\nu}\,\right)_{L^2}\right|\,\leq\,
C\,\left(e_{\nu}\right)^{1/2}\,\left\|\left(\wtilde Lu\right)_{\!\!\nu}\,\right\|_{L^2}\,,
\end{equation}
while from Lemma \ref{l:lzll-symb} one immediately recovers
\begin{eqnarray}
\left|2\,\Re\left(v_\nu\,,\,T_{\d^2_t(\alpha^{-1/2})}u_\nu\right)_{L^2}\right| & \leq & C\,
\|v_\nu\|_{L^2}\,\log\!\left(1+\gamma+\frac{1}{\veps}\right)\frac{1}{\veps}\,\|u_\nu\|_{L^2} \label{est:zeta} \\
&\leq & C\,(\nu+1)\,e_{\nu}\,, \nonumber
\end{eqnarray}
where we have used the fact that $\varepsilon=2^{-\nu}$.
The other two terms of \eqref{eq:d_t-v} will be treated later.

\subsubsection{$w_\nu$ term}

We now differentiate $w_\nu$ with respect to the time variable: thanks to a broad use of symbolic calculus, we get
the following sequence of equalities:
\begin{eqnarray}
 \frac{d}{dt}\left\|w_\nu\right\|^2_{L^2} & = & 2\,\Re\left(T_{\d_t(\alpha^{1/2})(\gamma^2+|\xi|^2)^{1/2}}u_\nu\,,\,w_\nu\right)_{L^2}
\,+\,2\,\Re\left(T_{\alpha^{1/2}(\gamma^2+|\xi|^2)^{1/2}}\d_tu_\nu\,,\,w_\nu\right)_{L^2} \label{eq:d_t-w} \\
& = & 2\,\Re\left(T_{\alpha(\gamma^2+|\xi|^2)^{1/2}}T_{-\d_t(\alpha^{-1/2})}u_\nu\,,\,w_\nu\right)_{L^2}\,+\,
2\,\Re\left(R_1u_\nu\,,\,w_\nu\right)_{L^2} \nonumber \\
& & +\,2\,\Re\left(T_{\alpha(\gamma^2+|\xi|^2)^{1/2}}T_{\alpha^{-1/2}}\d_tu_\nu\,,\,w_\nu\right)_{L^2}\,+\,
2\,\Re\left(R_2\d_tu_\nu\,,\,w_\nu\right)_{L^2} \nonumber \\
& = & 2\,\Re\left(v_\nu\,,\,T_{\alpha(\gamma^2+|\xi|^2)^{1/2}}w_\nu\right)_{L^2}\,+\,
2\,\Re\left(v_\nu\,,\,R_3w_\nu\right)_{L^2}  \nonumber\\
& & +\,2\,\Re\left(R_1u_\nu\,,\,w_\nu\right)_{L^2}\,+\,2\,\Re\left(R_2\d_tu_\nu\,,\,w_\nu\right)_{L^2} \nonumber \\
& = & 2\,\Re\left(v_\nu\,,\,T_{\alpha^{-1/2}}T_{\alpha^{3/2}(\gamma^2+|\xi|^2)^{1/2}}w_\nu\right)_{L^2}\,+\,
2\,\Re\left(v_\nu\,,\,R_4w_\nu\right)_{L^2} \nonumber \\
& & +\,2\,\Re\left(v_\nu\,,\,R_3w_\nu\right)_{L^2}\,+\,2\,\Re\left(R_1u_\nu\,,\,w_\nu\right)_{L^2}\,+\,
2\,\Re\left(R_2\d_tu_\nu\,,\,w_\nu\right)_{L^2} \nonumber \\
& = & 2\,\Re\left(v_\nu\,,\,T_{\alpha^{-1/2}}T_{\alpha^2(\gamma^2+|\xi|^2)}u_\nu\right)_{L^2} \nonumber \\
& & +\,2\,\Re\left(v_\nu\,,\,T_{\alpha^{-1/2}}R_5u_\nu\right)_{L^2}\,+\,
2\,\Re\left(v_\nu\,,\,R_4w_\nu\right)_{L^2} \nonumber \\
& & +\,2\,\Re\left(v_\nu\,,\,R_3w_\nu\right)_{L^2}\,+\,2\,\Re\left(R_1u_\nu\,,\,w_\nu\right)_{L^2}\,+\,
2\,\Re\left(R_2\d_tu_\nu\,,\,w_\nu\right)_{L^2}\,. \nonumber
\end{eqnarray}

The important issue is that remainders can be controlled in terms of the approximate energy. As a matter
of facts, taking advantage of Proposition \ref{p:symb-calc} and Lemma \ref{l:lzll-symb}, we get the following estimates.
\begin{itemize}
 \item $R_1$ has principal symbol equal to $\d_\xi\left(\alpha(\gamma^2+|\xi|^2)^{1/2}\right)\d_x\d_t(\alpha^{-1/2})$, so
\begin{eqnarray}
 \left|2\,\Re\left(R_1u_\nu\,,\,w_\nu\right)_{L^2}\right| & \leq 
 & C\,(\nu+1)\,e_{\nu}\,. \label{est:R_1}
\end{eqnarray}
 \item The principal symbol of $R_2$ is instead $\d_\xi\left(\alpha(\gamma^2+|\xi|^2)^{1/2}\right)\d_x(\alpha^{-1/2})$, so,
remembering also the control on $\|\d_tu_\nu\|_{L^2}$, we have:
\begin{equation} \label{est:R_2}
 \left|2\,\Re\left(R_2\d_tu_\nu\,,\,w_\nu\right)_{L^2}\right|\,\leq\,C\,\nu\,\left(e_{\nu}\right)^{1/2}\,\|w_\nu\|_{L^2}\,\leq\,
C\,(\nu+1)\,e_{\nu}\,.
\end{equation}
 \item Symbolic calculus tells us that the principal part of $R_3$ is given by
$\d_\xi\d_x\left(\alpha(\gamma^2+|\xi|^2)^{1/2}\right)$, therefore
\begin{equation} \label{est:R_3}
 \left|2\,\Re\left(v_\nu\,,\,R_3w_\nu\right)_{L^2}\right|\,\leq\,C\,\|v_\nu\|_{L^2}\,\nu\,\|w_\nu\|_{L^2}\,\leq\,
C\,(\nu+1)\,e_{\nu}\,.
\end{equation}
 \item Now, $R_4$ has $\d_\xi\left(\alpha^{-1/2}\right)\d_x\left(\alpha^{3/2}(\gamma^2+|\xi|^2)^{1/2}\right)$ as principal symbol, so
\begin{equation} \label{est:R_4}
 \left|2\,\Re\left(v_\nu\,,\,R_4w_\nu\right)_{L^2}\right|\,\leq\,C\,\|v_\nu\|_{L^2}\,\nu\,\|w_\nu\|_{L^2}\,\leq\,
C\,(\nu+1)\,e_{\nu}\,.
\end{equation}
 \item $R_5$ is given, at the highest order, by the product of the symbols
$\d_\xi\left(\alpha^{3/2}(\gamma^2+|\xi|^2)^{1/2}\right)$ and $\d_x\left(\alpha^{1/2}(\gamma^2+|\xi|^2)^{1/2}\right)$, and then we get
\begin{equation} \label{est:R_5}
 \left|2\,\Re\left(v_\nu\,,\,T_{\alpha^{-1/2}}R_5u_\nu\right)_{L^2}\right|\,\leq\,
C\,\|v_\nu\|_{L^2}\,2^\nu\,\nu\,\|u_\nu\|_{L^2}\,\leq\,
C\,(\nu+1)\,e_{\nu}\,.
\end{equation}
\end{itemize}

%%%%%%%%%%%%%%%%%%%%%%%%%%%
\subsubsection{Principal part of the operator $L$}
%%%%%%%%%%%%%%%%%%%%%%%%%%%%%%%%%%%%%%%%%%%%%%%%%%%%%%%%%%%%%%%%%%%%%%%%%%%%%%%%%%%%%%%%%%%%%%

Now, thanks to previous computations, it's natural to pair up the second term of \eqref{eq:d_t-v} with the first one of the last
equality of \eqref{eq:d_t-w}. As $\alpha$ is a symbol of order $0$, we have
$$
\left|2\,\Re\!\left(v_\nu\,,\,T_{\alpha^{-1/2}}\sum_{i,j}\d_i\left(T_{a_{ij}}\d_ju_\nu\right)\right)_{\!\!\!L^2}+
2\,\Re\!\left(v_\nu\,,\,T_{\alpha^{-1/2}}T_{\alpha^2(\gamma^2+|\xi|^2)}u_\nu\right)_{L^2}\right|\,\leq\,
C\,\|v_\nu\|_{L^2}\,\|\zeta_\nu\|_{L^2}\,,
$$
where we have set
\begin{equation} \label{eq:zeta_nu}
\zeta_\nu\,:=\,T_{\alpha^2(\gamma^2+|\xi|^2)}u_\nu\,+\,\sum_{i,j}\d_i\left(T_{a_{ij}}\,\d_ju_\nu\right)\,=\,
\sum_{ij}\,T_{a_{ij, \veps}\xi_i\xi_j+\gamma^2}u_\nu\,+\,\d_i\left(T_{a_{ij}}\,\d_ju_\nu\right)\,.
\end{equation}
We remark that
$$
\d_i\left(T_{a_{ij}}\,\d_ju_\nu\right) = T_{\d_ia_{ij}}\d_ju_\nu\,-\,T_{a_{ij}\xi_i\xi_j}u_\nu,
$$
where, with a little abuse of notation, we have written $\d_ia_{ij}$ meaning that we are taking
the derivative of the classical symbol associated to $a_{ij}$.

First of all, using also spectral localization properties, we have
\begin{eqnarray}
 \left\|T_{\d_ia_{ij}}\d_ju_\nu\right\|_{L^2} & \leq & 
\|S_{\mu}\,\d_ia_{ij}\|_{L^\infty}\,\|S_{\mu+2}\d_ju_\nu\|_{L^2}\,+\,
\sum_{k\geq{\mu+3}} \left\|\nabla S_{k-3}a_{ij}\right\|_{L^\infty}\,
\left\|\Delta_k\nabla u_\nu\right\|_{L^2} \label{est:T_1} \\
& \leq & C\left(\mu+1\right)\,\left(\sup_{i,j}\|a_{ij}\|_{LL_x}\right)\,\|\nabla u_\nu\|_{L^2} \nonumber \\
& & \qquad\qquad\qquad\quad+\,
\sum_{k\geq\mu+3\,,\,k\sim\nu}\left(k+1\right)\,\left(\sup_{i,j}\|a_{ij}\|_{LL_x}\right)\,\|\nabla\Delta_k u_\nu\|_{L^2} \nonumber \\
& \leq & C_\mu\,(\nu+1)\,\left(\sup_{i,j}\|a_{ij}\|_{LL_x}\right)\,\left(e_{\nu}\right)^{1/2}\,, \nonumber
\end{eqnarray}
where $\mu$ is the parameter fixed in \eqref{pd_eq:pp_symb} and we have used also \eqref{est:S_ka}.

Next, we have to control the term
$$
T_{a_{ij,\veps}\xi_i\xi_j+\gamma^2}u_\nu\,-\,T_{a_{ij}\xi_i\xi_j}u_\nu\,=\,T_{\left(a_{ij,\veps}-a_{ij}\right)\xi_i\xi_j}u_\nu\,+\,
T_{\gamma^2}u_\nu\,.
$$
It's easy to see that
$$
\left\|T_{\left(a_{ij,\veps}-a_{ij}\right)\xi_i\xi_j}u_\nu\right\|_{L^2}\,\leq\,
C\,\veps\,\log\left(1+\frac{1}{\veps}\right)2^\nu\,\|\nabla u_\nu\|_{L^2}\,,
$$
and so, keeping in mind that $\veps=2^{-\nu}$,
\begin{equation} \label{est:delta-T}
 \left\|T_{\left(a_{ij,\veps}-a_{ij}\right)\xi_i\xi_j+\gamma^2}u_\nu\right\|_{L^2}\,\leq\,
C_\g\,(\nu+1)\left(e_{\nu}\right)^{1/2}\,.
\end{equation}

Therefore, from \eqref{est:T_1} and  \eqref{est:delta-T} we finally get
\begin{equation} \label{est:sec-ord}
 \left|2\,\Re\left(v_\nu,T_{\alpha^{-1/2}}\sum_{i,j}\d_i\left(T_{a_{ij}}\d_ju_\nu\right)\right)_{L^2}+
2\,\Re\left(v_\nu,T_{\alpha^{-1/2}}T_{\alpha^2(\g^2+|\xi|^2)}u_\nu\right)_{L^2}\right|\,\leq\,C\,(\nu+1)\,e_{\nu}\,,
\end{equation}
where the constant $C$ depends on the log-Lipschitz norm (with respect to space) of the coefficients $a_{ij}$
and on the fixed parameters $\mu$ and $\g$.

\medskip
To sum up, from inequalities \eqref{est:d_t-z}, \eqref{est:Lu_nu}, \eqref{est:zeta} and \eqref{est:sec-ord} and from estimates of
remainder terms \eqref{est:R_1}-\eqref{est:R_5}, we can conclude that
\begin{eqnarray}
 \frac{d}{dt}e_{\nu}(t) & \leq & C_1\,(\nu+1)\,e_{\nu}(t)\,+\,
C_2\,\left(e_{\nu}(t)\right)^{1/2}\,\left\|\left(\wtilde Lu\right)_{\!\!\nu}(t)\right\|_{L^2} \label{est:d_t-e} \\
 & & \qquad\qquad\qquad\qquad\qquad
+\left|2\sum_{i,j}\Re\left(v_\nu\,,\,T_{\alpha^{-1/2}}\d_i\left[\Delta_\nu,T_{a_{ij}}\right]\d_ju\right)_{L^2}\right|\,. \nonumber
\end{eqnarray}

%%%%%%%%%%%%%%%%%%%%%%%%%%%%%%%%%%%%%%%%%%%%%
\subsection{Commutator term} \label{ss:comm}
%%%%%%%%%%%%%%%%%%%%%%%%%%%%%%%%%%%%%%%%%%%%%
We want to estimate the quantity 
$$
\left|\sum_{i,j}\Re\left(v_\nu\,,\,T_{\alpha^{-1/2}}\d_i\left[\Delta_\nu,T_{a_{ij}}\right]\d_ju\right)_{L^2}\right|\,.
$$

We start remarking that 
$$
[\Delta_\nu, T_{a_{ij}}]w= [\Delta_\nu, S_\mu a_{ij}]S_{\mu+2}w+\sum_{k=\mu+3}^{+\infty}[\Delta_\nu, S_{k-3}a_{ij}]\,\Delta_k w,
$$
where $\mu$ is fixed, as usual (see Remark \ref{r:p-prod}). In fact $\Delta_\nu$ and $\Delta_k$ commute, so that
$$
\Delta_\nu(S_\mu{a_{ij}}S_{\mu+2}w)-
S_\mu{a_{ij}}(S_{\mu+2}\Delta_\nu w)= \Delta_\nu(S_\mu{a_{ij}}S_{\mu+2}w)-
S_\mu{a_{ij}}\Delta_\nu (S_{\mu+2}w),
$$
and similarly
$$
\Delta_\nu(S_{k-3}{a_{ij}}\Delta_kw)-
S_{k-3}{a_{ij}}\Delta_k(\Delta_\nu w)= \Delta_\nu(S_{k-3}{a_{ij}}\Delta_kw)-
S_{k-3}{a_{ij}}\Delta_\nu (\Delta_kw).
$$
Consequently, taking into account also that $S_{k+2}$ and $\Delta_k$ commute with $\partial_j$, we have
$$
\partial_i\left([\Delta_\nu, T_{a_{ij}}]\,\partial_ju\right)\,=\,\partial_i\left(
[\Delta_\nu, S_\mu a_{ij}]\,\partial_j (S_{\mu+2}u)\right)\,+\,
 \partial_i\left(\sum_{k=\mu+3}^{+\infty}[\Delta_\nu, S_{k-3}{a_{ij}}]\, \partial_j (\Delta_ku)\right).
 $$

Let's consider first the term
$$
\partial_i\left(
[\Delta_\nu, S_\mu a_{ij}]\,\partial_j (S_{\mu+2}u)\right).
$$
Looking at the support of the Fourier transform of $[\Delta_\nu, S_\mu a_{ij}]\,\partial_j (S_{\mu+2}u)$,
we have that it is contained in $\{|\xi|\leq 2^{\mu+4}\}$ and  moreover 
$[\Delta_\nu, S_\mu a_{ij}]\,\partial_j (S_{\mu+2}u)$ is identically $0$ if $\nu\geq \mu+5$.
Then, from Bernstein's inequalities and \cite[Th. 35]{Coi-Mey} we have that
$$
\left\|\partial_i\left([\Delta_\nu, S_\mu a_{ij}]\,\partial_j (S_{\mu+2}u)\right)\right\|_{L^2}\,\leq\,
C_\mu\,\left(\sup_{i,j}\|a_{ij}\|_{LL_x}\right)\,\|S_{\mu+2}u\|_{L^2}\,,
$$
hence, putting all these facts together, we have
\begin{eqnarray}
& & \left|\sum_{\nu=0}^{+\infty}\,e^{-2\beta(\nu+1)t}\,2^{-2\nu\theta}\,\sum_{ij}
2\,\Re\!\biggl(v_\nu\,,\,T_{\alpha^{-1/2}}\,\partial_i\left(
[\Delta_\nu, S_\mu a_{ij}]\partial_j (S_{\mu+2}u)\,\right)\biggr)_{L^2}\right| \label{est:comm0} \\
 && \qquad\qquad\qquad \leq\, C_\mu\left(\sup_{i,j}\|a_{ij}\|_{LL_x}\right)\,
\sum_{\nu=0}^{\mu+4}e^{-2\beta(\nu+1)t}\,2^{-2\nu\theta}\|v_\nu\|_{L^2}\left(\sum_{h=0}^{\mu+2}\|u_h\|_{L^2}\right) \nonumber\\
&& \qquad\qquad\qquad\leq\, C_\mu\,\left(\sup_{i,j}\|a_{ij}\|_{LL_x}\right)\, e^{\beta(\mu +5)T}\,2^{(\mu+4)\theta} \nonumber \\
&& \qquad\qquad\qquad\qquad
\times\left(\sum_{\nu=0}^{\mu+4}\,e^{-\beta(\nu+1)t}\,2^{-\nu\theta}\|v_\nu\|_{L^2}\right)
\left(\sum_{h=0}^{\mu+4}\,e^{-\beta(h+1)t}\,2^{-h\theta}\|u_h\|_{L^2}\right) \nonumber \\
 && \qquad\qquad\qquad \leq\, C_\mu\,\left(\sup_{i,j}\|a_{ij}\|_{LL_x}\right)\,e^{\beta(\mu +5)T}\,2^{(\mu+4)\theta}
\sum_{\nu=0}^{\mu+4} e^{-2\beta(\nu+1)t}\,2^{-2\nu\theta}e_\nu(t)\,. \nonumber
\end{eqnarray}

Next, let's consider 
$$
\partial_i\left(\sum_{k=\mu+3}^{+\infty}[\Delta_\nu, S_{k-3}{a_{ij}}]\,\partial_j(\Delta_k u)\right).
$$
Looking at  the support of the Fourier transform, it is possible to see that
$$
[\Delta_\nu, S_{k-3}a_{ij}]\,\partial_j(\Delta_k u)
$$
is identically 0 if $|k-\nu|\geq 3$. Consequently the sum over $k$ is reduced to at most 5 terms:
$\partial_i([\Delta_\nu, S_{\nu-5}a_{ij}]\,\partial_j(\Delta_{\nu-2} u))+
\dots +\partial_i([\Delta_\nu, S_{\nu-1}a_{ij}]\,\partial_j(\Delta_{\nu+2} u))$,
each of them having the support of the Fourier transform contained in $\{|\xi|\leq 2^{\nu+1}\}$. 
Let's consider one of these terms, e.g. $\partial_i([\Delta_\nu, S_{\nu-3}a_{ij}]\,\partial_j(\Delta_{\nu} u))$,
the computation for the other ones being similar. We have, from Bernstein's inequalities,
$$
\left\|\partial_i\left([\Delta_\nu, S_{\nu-3}a_{ij}]\,\partial_j(\Delta_{\nu} u)\right)\right\|_{L^2}\,
\leq\, C\,2^\nu\, \|[\Delta_\nu, S_{\nu-3}a_{ij}]\,\partial_j(\Delta_{\nu} u)\|_{L^2}\,.
$$
On the other hand, using \cite[Th. 35]{Coi-Mey} again, we have:
$$
\|[\Delta_\nu, S_{\nu-3}a_{ij}]\partial_j(\Delta_{\nu} u)\|_{L^2}\,\leq\, C\,
\|\nabla S_{\nu-3}a_{ij}\|_{L^\infty}\,\|\Delta_\nu u\|_{L^2}\,,
$$
where $C$ does not depend on $\nu$. Consequently, using also (\ref{est:S_ka}), we deduce
$$
\left\|\partial_i\left([\Delta_\nu, S_{\nu-3}a_{ij}]\,\partial_j(\Delta_{\nu} u)\right)\right\|_{L^2}
\,\leq\, C\, 2^\nu\, (\nu+1)\,\left(\sup_{i,j}\|a_{ij}\|_{LL_x}\right)\,\|\Delta_{\nu} u\|_{L^2}\,.
$$
From this last inequality and similar ones for the other terms, we infer
$$
\left|\sum_{i,j}\Re\left(v_\nu\,,\,T_{\alpha^{-1/2}}\partial_i
\left(\sum_{k=\mu+3}^{+\infty}[\Delta_\nu, S_{k-3}{a_{ij}}]\,\partial_j(\Delta_k u)\right)\right)_{L^2}\right|
\,\leq\, C\, \left(\sup_{i,j}\|a_{ij}\|_{LL_x}\right)\, (\nu+1) \,e_{\nu}(t)
$$
and then
\begin{eqnarray}
& & \left|\sum_{\nu=0}^{+\infty}\,e^{-2\beta(\nu+1)t}\,2^{-2\nu\theta}\,\sum_{ij}
2\,\Re\!\left(v_\nu\,,\,T_{\alpha^{-1/2}}\partial_i\left(\sum_{k=\mu+3}^{+\infty}[\Delta_\nu, S_{k-3}{a_{ij}}]\,
\partial_j(\Delta_k u)\right)\right)_{L^2}\right| \label{est:comm1} \\
 & & \qquad\qquad\qquad\qquad\qquad\qquad\qquad\quad\,\leq\,
C \,\left(\sup_{i,j}\|a_{ij}\|_{LL_x}\right)\,\sum_{\nu=0}^{+\infty}\,(\nu+1)\,e^{-2\beta(\nu+1)t}\,2^{-2\nu\theta}\,e_{\nu}(t)\,. \nonumber
\end{eqnarray}
Collecting the informations from \eqref{est:comm0} and \eqref{est:comm1}, we finally obtain
\begin{eqnarray}
& & \left|\sum_{\nu=0}^{+\infty}\,e^{-2\beta(\nu+1)t}\,2^{-2\nu\theta}\,\sum_{ij}
2\,\Re\!\left(v_\nu\,,\,T_{\alpha^{-1/2}}\d_i\left[\Delta_\nu,T_{a_{ij}}\right]\d_ju\right)_{L^2}\right| \label{est:comm} \\
 & & \qquad\qquad\qquad\qquad\qquad\qquad\qquad\qquad\,\leq\,
C_3\,\sum_{\nu=0}^{+\infty}\,(\nu+1)\,e^{-2\beta(\nu+1)t}\,2^{-2\nu\theta}\,e_{\nu}(t)\,, \nonumber
\end{eqnarray}
where $C_3$ depends on $\mu$, $\sup_{i,j}\|a_{ij}\|_{LL_x}$, on $\theta$ and on the product $\beta\,T$.
%%%%%%%%%%%%%%%%%%%%%%%
\subsection{Final estimate} \label{ss:f-est}
%%%%%%%%%%%%%%%%%%%%%%%%%%%%%

From \eqref{est:d_t-e} and \eqref{est:comm} we get
\begin{eqnarray*}
 \frac{d}{dt}E(t) & \leq & \left(C_1\,+\,C_3\,-\,2\beta\right)
\sum_{\nu=0}^{+\infty}\,(\nu+1)\,e^{-2\beta(\nu+1)t}\,2^{-2\nu\theta}\,e_{\nu}(t) \\
& & \qquad\qquad+\,C_2\,\sum_{\nu=0}^{+\infty}e^{-2\beta(\nu+1)t}\,2^{-2\nu\theta}\left(e_{\nu}(t)\right)^{1/2}\,
\left\|\left(\wtilde Lu(t)\right)_{\!\!\nu}\right\|_{L^2} \\
 & \leq & \left(C_1\,+\,C_3\,-\,2\beta\right)
\sum_{\nu=0}^{+\infty}\,(\nu+1)\,e^{-2\beta(\nu+1)t}\,2^{-2\nu\theta}\,e_{\nu}(t) \\
& & +\,C_2\,\sum_{\nu=0}^{+\infty}e^{-2\beta(\nu+1)t}\,2^{-2\nu\theta}\left(e_{\nu}(t)\right)^{1/2}\,
\left\|\left(\sum_{i,j}\d_i\left((a_{ij}-T_{a_{ij}})\d_ju\right)\right)_{\!\!\!\!\nu} \right\|_{L^2} \\
& & \qquad+\,C_2\,\sum_{\nu=0}^{+\infty}e^{-2\beta(\nu+1)t}\,2^{-2\nu\theta}\left(e_{\nu}(t)\right)^{1/2}\,
\left\|( Lu(t))_\nu \right\|_{L^2}\,.
\end{eqnarray*}
Now, applying H\"older inequality for series implies
\begin{eqnarray*}
&&\sum_{\nu=0}^{+\infty}e^{-2\beta(\nu+1)t}\,2^{-2\nu\theta}\left(e_{\nu}(t)\right)^{1/2}\,
\left\|\left(\sum_{i,j}\d_i\left((a_{ij}-T_{a_{ij}})\d_ju\right)\right)_{\!\!\!\!\nu} \right\|_{L^2} \\
&& \qquad\qquad\qquad\leq\,
\left(\sum_{\nu=0}^{+\infty}(\nu+1)\,e^{-2\beta(\nu+1)t}\,2^{-2\nu\theta}\,e_{\nu}(t)\right)^{1/2} \\
&& \qquad\qquad\qquad\qquad\times \left(\sum_{\nu=0}^{+\infty}e^{-2\beta(\nu+1)t}\,2^{-2\nu\theta}\,(\nu+1)^{-1}
\left\|\left(\sum_{i,j}\d_i\left((a_{ij}-T_{a_{ij}})\d_ju\right)\right)_{\!\!\!\!\nu}\right\|^2_{L^2}\right)^{1/2}\,,
\end{eqnarray*}
and, by definition, one has
\begin{eqnarray*}
&&\hspace{-1cm}
\left(\sum_{\nu=0}^{+\infty}e^{-2\beta(\nu+1)t}\,2^{-2\nu\theta}\,(\nu+1)^{-1}
\left\|\left(\sum_{i,j}\d_i\left((a_{ij}-T_{a_{ij}})\d_ju\right)\right)_{\!\!\!\!\nu}\right\|^2_{L^2}\right)^{1/2} \\
&& \qquad\qquad\qquad\qquad\qquad\qquad\qquad\qquad\quad
=\,\left\|\sum_{i,j}\d_i\left((a_{ij}-T_{a_{ij}})\d_ju\right)\right\|_{H^{-\theta-\beta^*t-{\frac{1}{2}}\log}}\,.
\end{eqnarray*}
From \cite[Prop. 3.4]{C-M} we have that
\begin{equation}
\left\| \sum_{i,j}\d_i\left((a_{ij}-T_{a_{ij}})\d_ju\right)\right\|_{H^{-s-{\frac{1}{2}}\log}}\,\leq\, C\,
\left(\sup_{i,j}\|a_{ij}\|_{LL_x}\right)\,\|u\|_{H^{1-s+{\frac{1}{2}}\log}}\,,  \label{a-T_a_est}
\end{equation}
with $C$ uniformly bounded for $s$ in a compact set of $]0,1[$. Consequently,
\begin{eqnarray*}
&&\left(\sum_{\nu=0}^{+\infty}e^{-2\beta(\nu+1)t}\,2^{-2\nu\theta}\,(\nu+1)^{-1}
\left\|\left(\sum_{i,j}\d_i\left((a_{ij}-T_{a_{ij}})\d_ju\right)\right)_{\!\!\!\!\nu}\right\|^2_{L^2}\right)^{1/2} \\
&& \qquad\qquad\qquad\qquad\qquad\qquad\qquad\qquad\qquad
\leq\, C\,\left(\sup_{i,j}\|a_{ij}\|_{LL_x}\right)\,\|u\|_{H^{1-\theta-\beta^*t+{\frac{1}{2}}\log}} \\
&& \qquad\qquad\qquad\qquad\qquad\qquad\qquad\qquad\qquad
\leq\,C\left(\sum_{\nu=0}^{+\infty}(\nu+1)\,e^{-2\beta(\nu+1)t}\,2^{-2\nu\theta}\,
e_{\nu}(t)\right)^{1/2}\,,
\end{eqnarray*}
and finally
\begin{eqnarray*}
&& \sum_{\nu=0}^{+\infty}e^{-2\beta(\nu+1)t}\,2^{-2\nu\theta}\left(e_{\nu}(t)\right)^{1/2}\,
\left\|\left(\sum_{i,j}\d_i\left((a_{ij}-T_{a_{ij}})\d_ju\right)\right)_{\!\!\!\!\nu} \right\|_{L^2} \\ 
&&\qquad\qquad\qquad\qquad\qquad\qquad\qquad\qquad\qquad
\leq\, C_4\, \sum_{\nu=0}^{+\infty}(\nu+1)e^{-2\beta(\nu+1)t}\,2^{-2\nu\theta}\,e_{\nu}(t)\,,
\end{eqnarray*}
with $C_4$ uniformly bounded for $\beta^* t +\theta$ in a compact set of $]0,1[\,$. So, if we take $\beta>0$
(recall that $\beta^*=\beta(\log2)^{-1}$) and $T\in\,]0,T_0]$ such that
\begin{equation} \label{eq:T}
\beta^*\,T\;=\;\delta\;<\;1-\theta\,,
\end{equation}
we have $0<\theta\leq\theta+\beta^*t\leq\theta+\delta<1$. Therefore we obtain
\begin{eqnarray*}
 \frac{d}{dt}E(t) & \leq & \left(C_1\,+C_4\,C_2\,+\,C_3-\,2\beta\right)
\sum_{\nu=0}^{+\infty}\,(\nu+1)\,e^{-2\beta(\nu+1)t}\,2^{-2\nu\theta}\,e_{\nu}(t) \\
& & \qquad\qquad\qquad\qquad
+\,C_2\,\sum_{\nu=0}^{+\infty}e^{-2\beta(\nu+1)t}\,2^{-2\nu\theta}\left(e_{\nu}(t)\right)^{1/2}\,\left\|( Lu(t))_\nu\right\|_{L^2}\,.
\end{eqnarray*}
Now we fix $\beta$ large enough, such that $C_1\,+\,C_4\,C_2\,+\,C_3-\,2\beta\,\leq\,0$: this corresponds to take $T>0$
small enough in \eqref{eq:T}. Therefore we finally arrive to the estimate
$$
\frac{d}{dt}E(t)\,\leq\,C_2\,\left(E(t)\right)^{1/2}\,\left\| Lu(t)\right\|_{H^{-\theta-\beta^*t}}\,;
$$
applying Gronwall's Lemma and keeping in mind \eqref{est:E(0)} and \eqref{est:E(t)} give us estimate \eqref{est:thesis}. \qed

\begin{rem} \label{r:T}
 Let us point out that relation \eqref{eq:T} gives us a condition on the lifespan $T$ of a solution to the Cauchy problem
for \eqref{eq:op}. It depends on $\theta\in\,]0,1[$ and on $\beta^*>0$, hence on constants $C_1\ldots C_4$.
Going after the guideline of the proof, one can see that, in the end, the time $T$ depends only on the
index $\theta$, on the parameter $\mu$ defined by conditions \eqref{est:param}, on constants $\lambda_0$ and $\Lambda_0$
defined by \eqref{h:hyp} and on the quantities $\sup_{i,j}\left|a_{ij}\right|_{LZ_t}$ and $\sup_{i,j}\left|a_{ij}\right|_{LL_x}$.
\end{rem}

\end{document}